\newtheorem{theorem}{Theorem}
\newtheorem{corollary}[theorem]{Corollary}
\newtheorem{proposition}[theorem]{Proposition}
\newtheorem{conjecture}[theorem]{Conjecture}
\newtheorem{statement}{Statement}
\newtheorem{question}[statement]{Question}
\newcommand{\floor}[1]{\left\lfloor#1\right\rfloor}
\newcommand{\ceil}[1]{\left\lceil#1\right\rceil}
\newcommand{\st}{\colon\,}
\newcommand{\diam}{\mathrm{diam}}
\newcommand{\comment}[1]{}
\tikzstyle vertex=[circle, inner sep=0pt, minimum size=1ex, fill=black]
\begin{document}

\author{Kevin G. Milans, Michael C. Wigal}
\title{Online coloring a token graph}
\date{\today}

\newcommand{\rankarg}[1]{\chi_{#1}}
\newcommand{\krank}{\rankarg{k}}
\newcommand{\trank}{\rankarg{2}}
\newcommand{\infrank}{\rankarg{\infty}}
\newcommand{\chis}{\chi_{s}}
\newcommand{\F}{\mathbb{F}}
\newcommand{\dist}{\mathrm{dist}}
\newcommand{\A}{\mathcal{A}}
\newcommand{\G}{\mathcal{G}}

\newcommand{\ZZ}{\mathbb{Z}}
\newcommand{\PP}{\mathbb{P}}
\newcommand{\RR}{\mathbb{R}}
\newcommand{\GG}{\mathbb{G}}

\newcommand{\val}[2]{f(#1,#2)}
\newcommand{\rval}[2]{f^{*}(#1,#2)}

\maketitle
\begin{abstract}
We study a combinatorial coloring game between two players, Spoiler and Algorithm, who alternate turns.  First, Spoiler places a new token at a vertex in $G$, and Algorithm responds by assigning a color to the new token.  Algorithm must ensure that tokens on the same or adjacent vertices receive distinct colors.  Spoiler must ensure that the token graph (in which two tokens are adjacent if and only if their distance in $G$ is at most $1$) has chromatic number at most $w$.  Algorithm wants to minimize the number of colors used, and Spoiler wants to force as many colors as possible.  Let $\val{w}{G}$ be the minimum number of colors needed in an optimal Algorithm strategy.

A graph $G$ is \emph{online-perfect} if $\val{w}{G} = w$.  We give a forbidden induced subgraph characterization of the class of online-perfect graphs.  When $G$ is not online-perfect, determining $\val{w}{G}$ seems challenging; we establish $\val{w}{G}$ asymptotically for some (but not all) of the minimal graphs that are not online-perfect.  The game is motivated by a natural online coloring problem on the real line which remains open.  
\end{abstract}

\section{Introduction}

Our work is motivated by the following simple combinatorial game between two players, Spoiler (sometimes called Presenter) and Algorithm.  Spoiler moves first, selecting a point $x\in \RR$ (repetition allowed), with the restriction that for each open unit interval $I$, Spoiler may play in $I$ at most $w$ times.  Algorithm responds by assigning a color to $x$ such that selected points at distance less than $1$ are assigned distinct colors.  The value of the game, denoted $h(w)$, is the minimum number of colors that an optimal Algorithm strategy needs in the worst case.  The greedy algorithm shows that $h(w)\le 2w-1$, as follows.  If none of the colors in $\{1,\ldots,2w-1\}$ is available for a selected point $x$, then either $(x-1,x]$ or $[x,x+1)$ contains at least $w$ previously played points which, after a small shift, can be captured in an open unit interval $I$ containing $x$.  Spoiler is forbidden to play again in $I$, contradicting Spoiler's selection of $x$.  Despite its simplicity, the greedy algorithm gives the best known upper bound on $h(w)$.

From below, Bosek, Felsner, Kloch, Krawczyk, Matecki, and Micek~\cite{BosekSurvey} proved that $h(w)\ge \floor{\frac{3}{2}w}$.  In their work, the problem of bounding $h(w)$ arises as a special case of the online chain partition problem for partially ordered sets.  To obtain the lower bound on $h(w)$, they offer the following strategy for Spoiler, which has three stages.  Let $k=\floor{\frac{w}{2}}$.  In the first stage, Spoiler plays $k$ times at $0$.  Let $A$ be the set of colors that Algorithm uses.  In the second stage, Spoiler plays at most $2k$ times in the interval $(1,2)$ so that all points with new colors (outside $A$) are less than all points with old colors (in $A$); Spoiler accomplishes this by always playing at a point between the new and old colors.  Spoiler stops this stage once Algorithm uses $k$ new colors.  Let $B$ be the set of new colors, and let $y\in (1,2)$ be a point between the new and old colors.  Note that $(y-1,y)$ contains $1$ and therefore each color in $B$ appears on a selected point in $(y-1,y)$. Similarly $(y-2,y-1)$ contains $0$ and therefore each color in $A$ appears on a selected point in $(y-2,y-1)$.  In the third stage, Spoiler plays $w-k$ times at $y-1$.  Algorithm must use $w-k$ additional colors outside $A\cup B$.  In total, Algorithm uses $w+k$ colors, and $w+k=\floor{\frac{3}{2}w}$.  This strategy for Spoiler gives the best known lower bound on $h(w)$.

It would be interesting to know the asymptotics of $h(w)$.  In attempting to make progress, we studied a generalization of this problem to graphs.  An \emph{independent set} in a graph $G$ is a set of vertices that are pairwise nonadjacent.  The \emph{chromatic number} of $G$, denoted $\chi(G)$, is the minimum size of a partition of $V(G)$ into independent sets.

For our generalization, we use the \emph{token model}, in which both the graph $G$ and the width $w$ are known to both Algorithm and Spoiler.  Spoiler selects a vertex $u\in V(G)$ and plays a \emph{token} $x$ at $u$.  In the \emph{token graph}, tokens $x$ and $y$ are adjacent if and only if $x$ and $y$ are on the same or neighboring vertices in $G$.  Spoiler must play in such a way that the token graph has chromatic number at most $w$.  After Spoiler plays a token $x$, Algorithm must assign a color to $x$ so that Algorithm's coloring of the token graph is \emph{proper}, meaning that tokens on the same or adjacent vertices in $G$ must be assigned distinct colors.  The \emph{value} of the game, denoted $\val{w}{G}$, is the minimum number of colors that an optimal strategy for Algorithm needs for the game in the worst case.  The motivating problem arises when $G$ is the graph $\GG$ with $V(\GG)=\RR$ and $uv\in E(\GG)$ if and only if $|u-v| < 1$.  With the exception of a few explicitly defined infinite graphs like $\GG$, we generally assume that all graphs are finite.

In some cases, it is interesting to limit Spoiler so that Spoiler can play at most $1$ token at each vertex.  We call this the \emph{restricted token model} and use $\rval{w}{G}$ to denote the value of the restricted token game on $G$ of width $w$.  Using $\overline{G}$ for the graph complement of $G$, we note that the restricted token game on $\overline{\GG}$ is an online clustering problem: Spoiler presents distinct points in $\RR$ and Algorithm assigns colors so that the color classes, called \emph{clusters}, have diameter less than $1$.  (Usually, clusters are allowed to have diameter at most $1$, but standard perturbation and compactness arguments imply that these notions lead to equivalent games.)  The trivial bounds are $\floor{\frac{3}{2}w}\le \rval{w}{\overline{\GG}} \le 2w-1$.  The first nontrivial bounds are due to Epstein and van Stee~\cite{ES}, who proved $(\frac{8}{5}-o(1))w \le \rval{w}{\overline{\GG}} \le \frac{7}{4}w$.  Ehmsen and Larson~\cite{EL} improved the upper bound to $\rval{w}{\overline{\GG}} \le \frac{5}{3}w$ and Kawahara and Kobayashi~\cite{KK} improved the lower bound to $\rval{w}{\overline{\GG}} \ge (\frac{13}{8}-o(1))w$.

\subsection{Other Models}

Several models of online graph coloring have been studied.  In the \emph{classical model}, Spoiler presents the vertices of a graph in some order $v_1, \ldots, v_n$ and Algorithm must decide on a color for $v_i$ knowing only the subgraph induced by $\{v_1,\ldots,v_i\}$.  For an algorithm $\A$ and a graph family $\G$, the \emph{performance ratio} of $\A$ on $\G$, denoted $\rho_{\A}(n,\G)$, is the maximum, over all $n$-vertex graphs $G\in\G$ and orderings of $V(G)$, of the number of colors that $\A$ uses on $G$ divided by $\chi(G)$.  It is not difficult to show that even when $\G$ is the family of acyclic graphs, every algorithm $\A$ satisfies $\rho_{\A}(n,\G) \ge 1+\floor{\lg n}$, and so the performance ratio of an algorithm in the classical model typically depends on the number of vertices in the input graph.  In 1989, for the class $\G$ of all graphs, Lov\'asz, Saks, and Trotter~\cite{LST} proved that there exists an algorithm $\A$ such that $\rho_\A(n,\G) = (1+o(1))(2n/\log^* n)$, where $\log^* n$ is the least integer $k$ such that the $k$th iterated logarithm function $\log^{(k)}$ satisfies $\log^{(k)}(n) < 1$.  Also in 1989, M. Szegedy (unpublished) showed that every online algorithm $\A$ satisfies $\rho_\A(n,\G) \ge \Omega(n/(\log n)^2)$; for this and lower bounds on the performance of randomized algorithms, see Halld\'orsson and Szegedy~\cite{HS}.  In 1998, Kierstead~\cite{K1998} showed that there exists an algorithm $\A$ such that $\rho_\A(n,\G)\le O(n\log^{(3)}n/\log^{(2)}(n))$, giving the best known performance ratio for the class of all graphs in the classical model.  For an older but still useful survey on results in the classical model, see the survey paper by Kierstead~\cite{KiersteadSurvey}.

In addition to the token model that we study, there are at least two other \emph{known-graph models}.  Halld\'orsson~\cite{H2000} examined a model in which a host graph $G$ is known to both players.  Spoiler presents vertices to Algorithm, promising that the presented graph $H$ is a subgraph of $G$.  However, if $G$ contains many copies of $H$, then Algorithm does not know which of these copies is $H$.  Consider, for example, the case $G=C_6$.  If the first two vertices that Spoiler presents are nonadjacent, then the pair may be at distance $2$ in $G$ (in which case Algorithm prefers that they share a color) or at distance $3$ (in which case Algorithm prefers that they be colored differently).  The performance ratio of Algorithm is the number of colors that Algorithm uses divided by $\chi(G)$.  Halld\'orsson constructed for each $n$ an $n$-vertex graph $G$ such that every algorithm has performance ratio at least $\Omega(n/(\log n)^2)$ on $G$, obtaining essentially the same lower bound as Halld\'orsson--Szegedy against a stronger Algorithm.

A second known-graph model appears in a paper by Bartal, Fiat, and Leonardi~\cite{BFL} and an extended abstract by the same authors~\cite{BFLSTOC}.  Again, the host graph $G$ is known to both Algorithm and Spoiler.  In this model, Spoiler reveals the location of vertices in $G$.  The difficulty for Algorithm is that Spoiler may stop the game at any subgraph $H$ of $G$, and the performance ratio of Algorithm is the number of colors used divided by $\chi(H)$ (not $\chi(G)$).  This model can also be viewed as a slight variant of the restricted token model in which Algorithm does not know $w$.  As usual, bounds on the performance ratio are typically given in terms of $n$, where $n=|V(G)|$.  Bartal, Fiat, and Leonardi proved that there is an algorithm that achieves performance ratio $O(n^{1/2})$ for each $n$-vertex graph $G$ and that every algorithm has performance ratio $\Omega(n^{1-\log_5 4})$.  The lower bound applies even to randomized algorithms.  

\subsection{Our Results}

Since determining the asymptotics of $h(w)$, or equivalently $\val{w}{\GG}$, appears difficult, it is natural to study the token model in simpler settings.  A graph $G$ is \emph{online-perfect} if $\val{w}{G} = w$ for each $w$.  Our main result is a forbidden-subgraph characterization of the family of online-perfect graphs.  The minimal graphs that are not online-perfect are the odd cycles of length at least $5$, the $5$-cycle with $1$ or $2$ non-crossing chords, and the bull graph.  These graphs are perhaps the simplest candidates for host graphs where the token game becomes an interesting problem.  Of these, we obtain the asymptotic value of $\val{w}{G}$ when $G$ is an odd cycle or the $5$-cycle with $2$ non-crossing chords.  In particular, we show that if $n$ is odd and $n\ge 5$, then $\val{w}{C_n} = (\frac{n}{n-1}+o(1))w$ where $o(1)$ represents a function tending to $0$ as $w\to\infty$.  Also, $\val{w}{G} = \ceil{\frac{5}{4} w}$ when $w$ is even and $G$ is the $5$-cycle with $2$ non-crossing chords.  The cases that $G$ is the bull graph or that $G$ is a $5$-cycle with a single chord remain open.

\section{Characterization of online-perfect graphs}

Our first strategy for Algorithm shows that bipartite graphs are online-perfect.  A \emph{clique} in $G$ is a set of vertices that are pairwise adjacent, and the \emph{clique number} of a graph $G$, denoted $\omega(G)$, is the maximum size of a clique in $G$.  Since no independent set contains more than one vertex of a clique, we have $\chi(G)\ge \omega(G)$.

\begin{proposition}\label{prop:biop}
If $G$ is bipartite, then $\val{w}{G} = w$.
\end{proposition}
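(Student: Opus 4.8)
The plan is to handle the two inequalities separately. The lower bound $\val{w}{G}\ge w$ is immediate once $G$ has a vertex: Spoiler simply plays $w$ tokens on a single vertex $u$, creating the token graph $K_w$, which is permitted since $\chi(K_w)=w\le w$, and Algorithm is forced to use $w$ distinct colors. So the content is entirely in the upper bound $\val{w}{G}\le w$, which I would prove by exhibiting an explicit Algorithm strategy.

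For the upper bound, fix a bipartition $V(G)=A\cup B$ and let Algorithm use the palette $\{1,\dots,w\}$ with the following rule: a token placed on a vertex of $A$ receives the smallest available color, and a token placed on a vertex of $B$ receives the largest available color. Writing $d(x)$ for the current number of tokens on a vertex $x$, I would prove by induction on the number of moves the invariant that at every point in the game, for each $u\in A$ the tokens on $u$ use exactly the colors $\{1,\dots,d(u)\}$, and for each $v\in B$ the tokens on $v$ use exactly $\{w-d(v)+1,\dots,w\}$. The engine of the induction is the structural fact that Spoiler's legality constraint forces $d(u)+d(v)\le w$ for every edge $uv\in E(G)$ at all times: tokens sharing a vertex are mutually adjacent, and $uv\in E(G)$ makes every token on $u$ adjacent to every token on $v$, so the tokens on $u$ together with those on $v$ form a clique, whence $d(u)+d(v)\le\omega(\text{token graph})\le\chi(\text{token graph})\le w$.

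Given the invariant, suppose Spoiler legally plays a new token $x$ on a vertex $u$; by symmetry take $u\in A$, and let $a=d(u)$ before the move. By the invariant, the colors forbidden for $x$ are $\{1,\dots,a\}$ (from tokens on $u$) together with the union over neighbors $v$ of $u$ of the final segments $\{w-d(v)+1,\dots,w\}$; this union is itself the final segment $\{w-m+1,\dots,w\}$ with $m=\max_v d(v)$. Applying the structural fact to the edge from $u$ to a neighbor attaining $m$ in the configuration after $x$ is placed gives $(a+1)+m\le w$, so $w-m+1\ge a+2$ and color $a+1$ is available; and $a+1\le w$ since the tokens on $u$ after the move form a clique. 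Hence Algorithm assigns color $a+1$ to $x$, restoring the invariant at $u$ and leaving all other vertices unchanged. (When $u$ has no neighbors or no neighbor currently carrying a token, the forbidden set is just the initial segment $\{1,\dots,a\}$ and $a+1\le w$ still follows from $\omega\le\chi\le w$ applied to the tokens on $u$.)

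I do not expect a genuine obstacle here; the only points needing mild care are the symmetric bookkeeping for the $u\in B$ case in the inductive step, the degenerate low-degree cases just mentioned, and the trivial observation that one may assume $G$ has a vertex so that the lower bound applies.
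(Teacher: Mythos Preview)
Your proposal is correct and follows essentially the same approach as the paper: both use the prefix/suffix strategy on a fixed bipartition, maintaining that tokens on one side occupy an initial segment of $\{1,\dots,w\}$ and tokens on the other side occupy a final segment, with correctness driven by the clique bound $d(u)+d(v)\le w$ along each edge. Your write-up is somewhat more explicit (you verify the invariant inductively and treat the lower bound and degenerate cases), whereas the paper simply states the invariant and argues by contradiction that a collision on an edge would force more than $w$ tokens on its endpoints; but the underlying idea and the resulting color assignments are identical.
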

\begin{proof}
Let $G$ be an $(U,V)$-bigraph, and let $L$ be a list of $w$ colors.  Algorithm maintains the invariant that for each $u\in U$, the colors used for tokens on $u$ form a prefix of $L$.  Similarly, for each $v\in V$, the colors used for tokens on $v$ form a suffix of $L$.  When Spoiler plays a token $x$ at $u\in U$, Algorithm assigns to $x$ the first color in $L$ not already used for a token on $u$.  When Spoiler plays a token $y$ at $v\in V$, Algorithm assigns to $y$ the last color in $L$ not already used for a token on $v$.  We claim that Algorithm produces a proper coloring of the token graph.  Indeed, if $x$ and $y$ are assigned the same color but are on adjacent vertices $u\in U$ and $v\in V$, then $u$ and $v$ together contain more than $w$ tokens, implying that the token graph has clique number larger than $w$. 
\end{proof}

In a graph $G$, vertices $u$ and $v$ are \emph{twins} if $u$ and $v$ have the same neighbors in $V(G)-\{u,v\}$.  Given a graph $G$ and a vertex $u$ in $G$, we \emph{clone} $u$ by introducing a new vertex $u'$ that is a twin of $u$; in the new graph, $u$ and $u'$ may or may not be adjacent.  

\begin{proposition}\label{prop:twin}
If $G'$ is obtained from $G$ by cloning a vertex, then $\val{w}{G'} = \val{w}{G}$.
\end{proposition}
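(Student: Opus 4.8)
The plan is to prove the two inequalities $\val{w}{G}\le\val{w}{G'}$ and $\val{w}{G'}\le\val{w}{G}$ separately. Write $u'$ for the clone of $u$, so that $V(G')=V(G)\cup\{u'\}$ and deleting $u'$ from $G'$ recovers $G$ as an induced subgraph. The inequality $\val{w}{G}\le\val{w}{G'}$ is the routine direction: more generally, $\val{w}{\cdot}$ is monotone under taking induced subgraphs, since Spoiler may confine the $G'$-game to the vertices of the induced copy of $G$, in which case the token graph --- and hence the width requirement --- is exactly that of a $G$-game, so whatever Spoiler can force on $G$ can be forced on $G'$.

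For $\val{w}{G'}\le\val{w}{G}$, Algorithm runs an optimal strategy for the $G$-game as a simulation: each token Spoiler plays on a vertex of $G$ other than $u$ is reported to the simulator verbatim, and Algorithm adopts the simulator's color; each token played on $u$ or on $u'$ is reported as a token on $u$, again adopting the simulator's color. The bookkeeping matching a $G'$-token on $u$ or $u'$ to a simulated token on $u$ depends on whether the clone is adjacent to $u$. If $u\sim u'$, Algorithm matches the $k$-th token placed on $u\cup u'$, in order of play, to the $k$-th simulated token on $u$. If $u\not\sim u'$, Algorithm matches the $i$-th token on $u$ and the $i$-th token on $u'$ both to the $i$-th simulated token on $u$; in particular these two $G'$-tokens receive a common color, and the simulated $G$-position carries $\max\{a,b\}$ tokens on $u$ whenever the $G'$-position carries $a$ tokens on $u$ and $b$ tokens on $u'$. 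Tokens on all other vertices are passed through unchanged; write $\sigma$ for the resulting correspondence from tokens of the $G'$-game to tokens of the simulated $G$-game.

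Two points require verification. First, the simulated $G$-game must remain within width $w$. If $u\sim u'$, the token graph of the simulated $G$-position is isomorphic to the token graph of the $G'$-position, obtained by amalgamating the two true-twin cliques on $u$ and $u'$ into a single clique. If $u\not\sim u'$, the token graph of the simulated position is isomorphic to an induced subgraph of the token graph of the $G'$-position: keep whichever of the cliques on $u$, $u'$ is larger together with all tokens on the remaining vertices, using that $u'$ has the same neighborhood in $G'$ as $u$ has in $G$. Either way, the chromatic number of the simulated token graph is at most $w$, so the simulated game is legal and the simulator returns a color. Second, Algorithm's coloring of $G'$ must be proper. If two $G'$-tokens $x$ and $y$ receive the same color, then so do their images $\sigma(x)$ and $\sigma(y)$, and since the simulated coloring is proper, either $\sigma(x)=\sigma(y)$ or $\sigma(x)$ and $\sigma(y)$ lie on non-adjacent vertices of $G$. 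In the former case, the only $G'$-tokens identified by $\sigma$ are a token on $u$ and a token on $u'$ with the same index, which lie on the non-adjacent vertices $u$ and $u'$; in the latter case, since $G=G'-u'$ is an induced subgraph and $N_{G'}(u')=N_G(u)$, the vertices carrying $x$ and $y$ are non-adjacent in $G'$. In both cases $x$ and $y$ lie on non-adjacent vertices, so the coloring is proper. As Algorithm never uses more colors than the simulated optimal $G$-strategy, this yields $\val{w}{G'}\le\val{w}{G}$.

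I expect the main obstacle to be the false-twin case $u\not\sim u'$. Reporting $u'$-plays naively as fresh $u$-plays would let the simulated game exceed width $w$ --- already for $G=K_1$ and $G'=\overline{K_2}$, Spoiler may place $w$ tokens on each of the two vertices, whereas $\val{w}{K_1}=w$. Forcing $u$-tokens and $u'$-tokens to share colors in the simulation resolves this, is safe precisely because such tokens are non-adjacent in $G'$, and makes the width bound a consequence of the induced-subgraph observation above; the residual check is that two tokens on a single vertex (two on $u$, or two on $u'$) still receive distinct colors, which holds because they are matched to distinct simulated tokens on $u$.
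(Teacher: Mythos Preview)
Your proof is correct and follows essentially the same approach as the paper: induced-subgraph monotonicity for one inequality, and a simulation of an optimal $G$-strategy for the other, with the two cases $u\sim u'$ and $u\not\sim u'$ handled separately by either merging all $u,u'$-tokens onto $u$ or by keeping only $\max\{a,b\}$ simulated tokens on $u$. Your explicit pairing of the $i$-th token on $u$ with the $i$-th token on $u'$ in the false-twin case is a specific instance of the paper's instruction to reuse ``one of the colors appearing at $u'$ that does not appear at $u$'', and your verification of width and properness is somewhat more carefully spelled out than the paper's, but the underlying argument is the same.
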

\begin{proof}
Suppose that $G'$ be obtained from $G$ by cloning $u$ to a new vertex $u'$.  Since $G$ is an induced subgraph of $G'$, we have $\val{w}{G'} \ge \val{w}{G}$.  To show $\val{w}{G'} \le \val{w}{G}$, we let $\A$ be an optimal strategy for Algorithm for $G$ and give an Algorithm strategy $\A'$ for $G'$.  The strategy depends on whether or not $uu'\in E(G)$.  

First, suppose $uu'\in E(G')$.  We have $\A'$ simulate $\A$ on a token game on $G$.  If Spoiler plays a token at a vertex $v\not\in\{u,u'\}$, then $\A'$ plays a token at $v$ in the simulation, and $\A'$ gives the real token the same color as $\A$ gives to the simulated token.  If Spoiler plays a token at $u$ or $u'$, then $\A'$ plays a token at $u$ in the simulation and again responds with the same color as $\A$.  Since $u$ and $u'$ are adjacent in $G'$, the token graph in the real game is the same as the token graph in the simulation.  Therefore the width of the real game and the width of the simulation are equal and we have $\val{w}{G'}\le \val{w}{G}$.

Suppose that $u$ and $u'$ are nonadjacent.  In this case, we modify the strategy slightly so that the number of tokens at $u$ in the simulation is equal to the maximum of the number of tokens at $u$ and $u'$ in the real game.  If Spoiler plays a token at $u$ or $u'$ which would increase this maximum, then $\A'$ plays a token at $u$ in the simulation and responds with the same color.  If Spoiler plays a token at $u$ when $u'$ already has at least as many tokens, then $\A'$ does not play any tokens in the simulation and responds with one of the colors appearing at $u'$ that does not appear at $u$.  The case that Spoiler plays a token at $u'$ when $u$ already has at least as many tokens is similar.  Again, the width of the real game and the width of the simulation are equal, and it follows that $\val{w}{G'} \le \val{w}{G}$.
\end{proof}

For graphs $G_1$ and $G_2$, we use $G_1 + G_2$ to denote the disjoint union of $G_1$ and $G_2$.  The \emph{join} of $G_1$ and $G_2$, denoted $G_1 \vee G_2$, is the graph obtained from disjoint copies of $G_1$ and $G_2$ with all vertices in the copy of $G_1$ adjacent to all vertices in the copy of $G_2$.  The following proposition, which we include for completeness, is an easy consequence of the theory of \emph{cographs}, the graphs that do not contain an induced copy of $P_4$.

\begin{proposition}\label{prop:P4-twins}
A graph $G$ is $P_4$-free if and only if $G$ is obtainable from $K_1$ by cloning vertices.
\end{proposition}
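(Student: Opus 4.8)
The plan is to prove both directions by induction on $|V(G)|$, using the standard structure theorem for cographs. Recall that cographs are exactly the graphs closed under the operations of disjoint union and join starting from $K_1$; equivalently, every cograph on at least two vertices is either a disjoint union $G_1 + G_2$ or a join $G_1 \vee G_2$ of smaller cographs. We will also use the fact that disjoint unions and joins are built from cloning, which is the heart of the argument.

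For the easy direction, suppose $G$ is obtainable from $K_1$ by cloning vertices. Since $K_1$ is trivially $P_4$-free, it suffices to show that cloning a vertex preserves $P_4$-freeness. Suppose $G$ is $P_4$-free and $G'$ is obtained by cloning $u$ to a twin $u'$. If $G'$ contained an induced $P_4$, then since $u$ and $u'$ are twins, this $P_4$ cannot use both $u$ and $u'$ (twins in an induced $P_4$ would have to be the two endpoints or two other non-adjacent-or-adjacent pair, but any two twins $a,b$ with a common situation force a contradiction: in $P_4 = a_1a_2a_3a_4$, no two vertices have the same neighborhood within the other two). Hence the induced $P_4$ lives in $G' - u'$ or in $G' - u$, each of which is isomorphic to $G$, contradicting that $G$ is $P_4$-free.

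For the forward direction, suppose $G$ is $P_4$-free with $|V(G)| \ge 2$. The main structural input is that $\comp{G}$ is connected or $G$ is disconnected (in fact, a classical theorem of Seinsche states that a $P_4$-free graph on at least two vertices is disconnected or its complement is disconnected). If $G$ is disconnected, write $G = G_1 + G_2$ where $G_1, G_2$ are nonempty; if $\comp{G}$ is disconnected, write $G = G_1 \vee G_2$. In either case $G_1, G_2$ are induced subgraphs of $G$, hence $P_4$-free, and by induction each is obtainable from $K_1$ by cloning. It then remains to observe that if $H_1$ is obtainable from $K_1$ by cloning and $H_2$ is obtainable from $K_1$ by cloning, then so are $H_1 + H_2$ and $H_1 \vee H_2$: start from $H_1$, pick any vertex $v$, clone it to get a new vertex $v'$, and then ``grow'' a copy of $H_2$ out of $v'$ by repeatedly cloning within that new part — choosing the clones non-adjacent throughout for $H_1 + H_2$, and making the initial clone $v'$ non-adjacent to $v$ but otherwise adjacent appropriately. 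Formally, cloning $v$ to a new vertex $v'$ that is non-adjacent to $v$ and adjacent (resp.\ non-adjacent) to everything in $H_2$'s eventual footprint realizes the join (resp.\ union) step; one builds $H_2$ on the new vertices by the same induction.

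The main obstacle is bookkeeping in the last step: making precise that a sequence of clonings can simultaneously reproduce a second cograph $H_2$ in the ``new'' part while leaving $H_1$ untouched, and that adjacency between the two parts comes out uniformly as required for a join or union. The cleanest way to handle this is a separate lemma: \emph{if $H$ is obtainable from $K_1$ by cloning and $v$ is a vertex disjoint from $H$ with a prescribed uniform adjacency to all of $V(H)$ — either adjacent to all or to none — then the graph on $V(H) \cup \{v\}$ is also obtainable from $K_1$ by cloning.} This lemma is itself an easy induction on $|V(H)|$ (when $H = H' + H''$ or $H' \vee H''$, absorb $v$ into whichever factor, or attach at the top level), and with it the disjoint-union and join closure statements follow immediately, completing the forward direction.
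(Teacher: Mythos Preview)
Your approach is the same as the paper's: use the cograph decomposition $G=G_1+G_2$ or $G=G_1\vee G_2$, apply induction to $G_1$ and $G_2$, and then argue that the class of graphs obtainable by cloning is closed under disjoint union and join. The reverse direction is also argued the same way (no two vertices of $P_4$ are twins).

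There is, however, a genuine gap in your closure argument. When you ``start from $H_1$, pick any vertex $v$, clone it to get a new vertex $v'$,'' the clone $v'$ is a twin of $v$ and therefore inherits \emph{all} of $v$'s neighbors in $H_1$; it is not isolated from $H_1$ unless $v$ was already isolated. So this first step does not produce the seed for the $H_2$ side of $H_1+H_2$. Your fallback lemma (adding a single vertex with uniform adjacency preserves obtainability) is correct, but it only adds one vertex; you never explain how to grow the rest of $H_2$, whose vertices are \emph{not} uniform to the current graph once the first $H_2$-vertex is present. (Your sketch of the lemma's proof via a union/join decomposition of $H$ also risks circularity, since you would need the smaller pieces to be obtainable by cloning, which is exactly the forward direction in progress; the clean proof of the lemma is by induction on the cloning sequence.) The paper sidesteps all of this by starting from $K_1$ rather than from $H_1$: take a single vertex $u_1$, clone it to $u_2$ (nonadjacent for the union, adjacent for the join), and then run the cloning sequence for $G_1$ starting at $u_1$ and the cloning sequence for $G_2$ starting at $u_2$. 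Because every new vertex is a twin of something already on its own side, the cross-adjacency between the two sides stays uniform throughout, yielding $G_1+G_2$ or $G_1\vee G_2$ as desired.
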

\begin{proof}
It is well known that if $G$ is $P_4$-free and $|V(G)|\ge 2$, then either $G = G_1 + G_2$ or $G= G_1 \vee G_2$ for some $P_4$-free graphs $G_1$ and $G_2$.  We obtain $G_1 + G_2$ by starting with a single vertex $u_1$, cloning to a nonadjacent vertex $u_2$, and inductively obtaining $G_1$ from $u_1$ and $G_2$ from $u_2$.  We obtain $G_1 \vee G_2$ similarly, except that we begin with adjacent vertices $u_1$ and $u_2$.

Conversely, cloning vertices preserves being $P_4$-free.  Indeed, it is easy to check that $P_4$ does not contain a pair of twins.  If $G$ is $P_4$-free but the graph $G'$ obtained from $G$ by cloning $u\in V(G)$ to a twin $u'\in V(G')$ produces a copy $P$ of $P_4$, then $u$ and $u'$ are both in $P$.  Since $u$ and $u'$ are twins in $G'$, they are also twins in $P$, a contradiction.
\end{proof}

When $G$ is an edge-transitive graph, we use $G^-$ to denote the graph obtained from $G$ by removing an edge.  Similarly, when the complement of $G$ is edge-transitive, we use $G^+$ to denote the graph obtained from $G$ by adding an edge.  The \emph{distance $k$-power} of $G$, denoted $G^k$, is the graph on $V(G)$ with $uv\in E(G^k)$ if and only if the distance between $u$ and $v$ in $G$ is at most $k$. 

\begin{theorem}\label{thm:online-perfect}
Let $G$ be a graph.  The following are equivalent.
\begin{enumerate}	
	\item\label{main:1} $G$ is online-perfect.
	\item\label{main:2} $\val{2}{G} = 2$.
	\item\label{main:3} $G$ does not contain any of the following as an induced subgraph:
\begin{center}
\begin{tabular}{c|c|c|c}
\begin{tikzpicture}[scale=0.75]
	\begin{scope}[xshift=-2.5cm,rotate=18]
	\foreach \n in {0,...,4}
	{{
		\node[vertex] at (72*\n:1cm) (V\n) {} ;
	}}
	\draw (V0) -- (V1) -- (V2) -- (V3) -- (V4) -- (V0) ;
	\end{scope}

	\begin{scope}[rotate=38.57]
	\foreach \n in {0,...,6}
	{{
		\node[vertex] at (51.43*\n:1cm) (V\n) {} ;
	}}
	\draw (V0) -- (V1) -- (V2) -- (V3) -- (V4) -- (V5) -- (V6) -- (V0) ;
	\end{scope}

	\begin{scope}[xshift=2.5cm,rotate=10]
	\foreach \n in {0,...,8}
	{{
		\node[vertex] at (40*\n:1cm) (V\n) {} ;
	}}
	\draw (V0) -- (V1) -- (V2) -- (V3) -- (V4) -- (V5) -- (V6) -- (V7) -- (V8) -- (V0) ;
	\end{scope}

	\node at (4.5,0) {$\ldots$} ;
\end{tikzpicture} & 

\begin{tikzpicture}[scale=0.75]
	\begin{scope}[rotate=18]
	\foreach \n in {0,...,4}
	{{
		\node[vertex] at (72*\n:1cm) (V\n) {} ;
	}}
	\draw (V0) -- (V1) -- (V2) -- (V3) -- (V4) -- (V0) ;
	\draw (V0) -- (V2) ;
	\end{scope}
\end{tikzpicture} & 

\begin{tikzpicture}[scale=0.75]
	\begin{scope}[rotate=18]
	\foreach \n in {0,...,4}
	{{
		\node[vertex] at (72*\n:1cm) (V\n) {} ;
	}}
	\draw (V0) -- (V1) -- (V2) -- (V3) -- (V4) -- (V0) ;
	\draw (V3) -- (V1) -- (V4);
	\end{scope}
\end{tikzpicture} & 

\begin{tikzpicture}[scale=0.75]
	\begin{scope}[every node/.style={vertex}]
		\path (0,0) node (U) {} ++(60:1cm) node (V) {} ++(30:1cm) node (W) {} ;
		\path (U) ++(120:1cm) node (X) {} ++(150:1cm) node (Y) {} ;
	\end{scope}

	\draw (U) -- (V) -- (X) -- (U) ;
	\draw (X) -- (Y) ;
	\draw (V) -- (W) ;
\end{tikzpicture} \\

$C_n$ for odd $n$ at least $5$ &
$C_5^+$ &
$P_5^2$ &
The Bull Graph $B$
\end{tabular}
\end{center}
	\item\label{main:4} $G$ is obtainable from a bipartite graph by cloning vertices.
\end{enumerate}
\end{theorem}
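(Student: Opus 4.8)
The plan is to prove all four statements equivalent by the cycle $\ref{main:1}\Rightarrow\ref{main:2}\Rightarrow\ref{main:3}\Rightarrow\ref{main:4}\Rightarrow\ref{main:1}$. Two of these are immediate. The implication $\ref{main:1}\Rightarrow\ref{main:2}$ is just the case $w=2$ of the definition of online-perfect. For $\ref{main:4}\Rightarrow\ref{main:1}$: if $G$ arises from a bipartite graph by a sequence of clonings, then starting from $\val{w}{\cdot}=w$ for that bipartite graph (Proposition~\ref{prop:biop}) and applying Proposition~\ref{prop:twin} once per cloning gives $\val{w}{G}=w$ for every $w$.

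For $\ref{main:2}\Rightarrow\ref{main:3}$ I would prove the contrapositive. If $G$ contains one of the four listed graphs $H$ as an induced subgraph, then $\val{2}{G}\ge\val{2}{H}$ (as noted in the proof of Proposition~\ref{prop:twin}), so it suffices to give, for each such $H$, a Spoiler strategy in the width-$2$ game on $H$ that forces a third color while keeping the token graph bipartite. For an odd cycle $C_n$ ($n\ge 5$) with vertices $v_0,\dots,v_{n-1}$: Spoiler plays tokens one at a time along $v_1,v_2,\dots,v_{n-3}$, so the token graph is a path and Algorithm's two-coloring is forced to alternate, giving $v_1$ and $v_{n-3}$ opposite colors (as $n-3$ is even); Spoiler then plays an isolated token at $v_{n-1}$, and depending on the color used there, plays a final token either at $v_0$ (adjacent to the tokens on $v_1$ and $v_{n-1}$) or at $v_{n-2}$ (adjacent to the tokens on $v_{n-3}$ and $v_{n-1}$); in either case the final token sees both colors and is forced to a third, while the token graph is a path on $n-1$ vertices, hence bipartite. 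Each of the three five-vertex exceptions admits a similar short strategy: Spoiler plays isolated tokens at two nonadjacent vertices $u,w$; if Algorithm colors them differently, some common neighbor of $u$ and $w$ is then forced to a third color; if Algorithm colors them alike, Spoiler grows a short path of forced-color tokens out of $w$ and then plays a vertex adjacent both to the far end of that path and to $u$, again forcing a third color. In every branch one vertex of $H$ is never used, so the token graph never contains $H$'s triangle and stays bipartite; checking the three exceptions is a short finite case analysis.

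The substantive implication is $\ref{main:3}\Rightarrow\ref{main:4}$, which I would prove by induction on $|V(G)|$. Suppose $G$ satisfies~\ref{main:3}. If $G$ has a \emph{nontrivial module} $M$ --- a vertex set with $2\le|M|<|V(G)|$ such that every vertex outside $M$ is adjacent to all of $M$ or to none of $M$ --- there are two cases. If $M$ has no neighbor outside $M$, then $G$ is the disjoint union of $G[M]$ and $G-M$, both of which satisfy~\ref{main:3} and hence, by induction, arise from bipartite graphs by cloning; a disjoint union of two such graphs again arises from a bipartite graph by cloning. If $M$ has a neighbor $x$ outside $M$, then $G[M]$ must be $P_4$-free: otherwise $x$ together with an induced $P_4$ inside $M$ would induce $K_1\vee P_4=P_5^2$. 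By Proposition~\ref{prop:P4-twins}, $G[M]$ is obtainable from $K_1$ by cloning, so $G$ is obtainable from the quotient $G/M$ (which has a single vertex in place of $M$, and is isomorphic to an induced subgraph of $G$, hence satisfies~\ref{main:3}) by performing that same sequence of clonings inside the position of $M$; by induction $G/M$ arises from a bipartite graph by cloning, and therefore so does $G$. It remains to treat the case that $G$ is \emph{prime} (has no nontrivial module); here I claim $G$ is bipartite. If not, $G$ has a shortest odd cycle, which is chordless; by~\ref{main:3} it cannot have length $5$ or more, so it is a triangle $T$. Since $G$ is prime, $T$ is not a module, so some vertex outside $T$ has exactly one or exactly two neighbors in $T$. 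Starting from this configuration and repeatedly invoking primeness to produce vertices that separate candidate modules, one shows that $G$ must contain an induced bull, $C_5^+$, $P_5^2$, or $C_5$, contradicting~\ref{main:3}.

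I expect the last step --- that a prime, non-bipartite, odd-hole-free graph must contain one of the four five-vertex obstructions --- to be the main obstacle; it is a genuine structural case analysis organized around how vertices attach to a triangle, and the hypothesis of no longer odd hole is what rules out the configurations that would otherwise escape the finite list. The remaining steps are comparatively routine: the two easy implications, the finite verification of the five-vertex Spoiler strategies in $\ref{main:2}\Rightarrow\ref{main:3}$, and the bookkeeping of the module cases in $\ref{main:3}\Rightarrow\ref{main:4}$.
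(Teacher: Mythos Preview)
Your cycle $(1)\Rightarrow(2)\Rightarrow(3)\Rightarrow(4)\Rightarrow(1)$ matches the paper's, and the easy implications are handled identically. Two points deserve comment.

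\textbf{The bull in $(2)\Rightarrow(3)$.} Your template does not cover the bull. In $B$ (triangle $v_2v_3v_4$, pendants $v_1\sim v_2$ and $v_5\sim v_4$), every nonadjacent pair with a common neighbor has that neighbor inside the triangle, and in the same-color branch your path-growing step is forced to place tokens on at least three of $v_2,v_3,v_4$, producing a triangle in the token graph and violating width $2$. For instance with $u=v_1$, $w=v_3$: growing to $v_4$ and then playing $v_2$ puts tokens on $\{v_1,v_2,v_3,v_4\}$. The only pair avoiding this, $\{v_1,v_5\}$, has no common neighbor, so your different-color branch fails there. The paper singles out the bull for precisely this reason and uses a different strategy: play $v_1,v_5$ first; if they receive the same color, play $v_2$ and $v_4$ (omitting $v_3$, so the token graph is $P_4$); if different, play $v_3$ and then whichever of $v_2,v_4$ now sees both colors.

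\textbf{The structural step $(3)\Rightarrow(4)$.} Here your route genuinely differs from the paper's. Your modular-decomposition reduction is correct and illuminating: a nontrivial module $M$ with an outside neighbor forces $G[M]$ to be $P_4$-free (else $K_1\vee P_4=P_5^2$ appears), which explains conceptually why $P_5^2$ is on the list, and induction on $G/M$ finishes. The paper instead takes a minimum twin-free counterexample and runs a BFS layering from a carefully chosen vertex, showing each distance class is edgeless. Your remaining prime case is real work, however: from a triangle $T$ in a prime graph one must repeatedly split candidate modules, and several branches (e.g.\ a vertex with two neighbors in $T$, followed by a splitter adjacent to both of those neighbors) do not yield a forbidden subgraph on five vertices and require further extension before a bull, $C_5^+$, $P_5^2$, or odd hole emerges. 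This can be carried out, but it is a case analysis of comparable length to the paper's; the BFS organization has the practical advantage that the branching is governed by distance from a fixed root rather than by an unbounded sequence of module-splitters.
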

\begin{proof}
It is obvious that \ref{main:1} implies \ref{main:2}.  To show that \ref{main:2} implies \ref{main:3}, we give strategies for Spoiler that force $3$ colors in a game of width $2$.  Let $u$ and $v$ be vertices in a graph $G$ that has induced $uv$-paths of both parities.  Spoiler plays a token $x$ at $u$ and a token $y$ at $v$.  If Algorithm assigns the same color to $x$ and $y$, then Spoiler forces two more colors by playing a token at each of the internal vertices on an induced $uv$-path of odd length.  If Algorithm assigns distinct colors to $x$ and $y$, then Spoiler forces a third color by playing a token at each of the internal vertices on an induced $uv$-path of even length.  In both cases, the token graph is a path, and so the game has width $2$.  Except for the bull graph $B$, each graph in \ref{main:3} has vertices $u$ and $v$ and induced $uv$-paths of both parities.  A different Spoiler strategy is needed for $B$.  Let $v_1, \ldots, v_5$ be the vertices of $B$ along its spanning path.  Spoiler first plays a token $x$ at $v_1$ and a token $y$ at $v_5$.  If Algorithm assigns the same color to $x$ and $y$, then Spoiler plays a token at $v_2$ and another at $v_4$.  This forces two new colors, and the token graph is $P_4$.  Otherwise, Algorithm assigns distinct colors to $x$ and $y$.  Spoiler plays a token $z$ at $v_3$; without loss of generality, we may assume that $x$ and $z$ have distinct colors.  Spoiler forces a third color by playing a token at $v_2$.  With one token on each vertex in $\{v_1, v_2, v_3, v_5\}$, the components of the token graph are $P_3$ and $P_1$, and again the token graph is bipartite.  It follows that if $\val{2}{G} = 2$, then $G$ does not contain an induced copy of any of the graphs listed in \ref{main:3}.

We show that \ref{main:3} implies \ref{main:4} by contradiction; let $G$ be a minimum counter-example.  If $G$ contains twins $u$ and $u'$, then $G-u'$ is a smaller graph avoiding induced copies of all graphs in \ref{main:3}, implying that $G-u'$ is obtainable from a bipartite graph by cloning.  We could then obtain $G$ from $G-u'$ by cloning $u$.  It follows that $G$ does not contain twins.  We consider two cases.  First, suppose that $\diam(G)\le 2$.  It must be that $G$ contains an induced copy $P$ of $P_4$, or else Proposition~\ref{prop:P4-twins} implies that $G$ is obtainable by cloning from the bipartite graph $K_1$.  Let $u_1, \ldots, u_4$ be the vertices of $P$.  Since $P$ is induced and $\diam(G)\le 2$, it follows that $u_1$ and $u_4$ have a common neighbor $v$ that is not on $P$.  Hence $v$ completes a $5$-cycle with $P$ in which all chords are incident to $v$, and so $V(P)\cup\{v\}$ induces one of $\{C_5, C_5^+, P_5^2\}$, all of which are forbidden.

We may assume that $\diam(G)\ge 3$.  Among all vertices in $G$ that are at distance $3$ from some vertex, select a vertex $u$, favoring vertices whose neighborhoods are independent sets. For $k\ge 0$, let $G_k$ be the subgraph of $G$ induced by vertices at distance $k$ from $u$.  We claim that if $H$ is a component of $G_k$ and $v_1\in V(G_{k-1})$, then $v_1$ is adjacent to all vertices in $H$ or none of them.  Indeed, if this is not the case, then let $k$ be the least integer where this fails.  Since $v_1$ has a neighbor and a non-neighbor in $H$ and $H$ is connected, there exists $v_0w_0 \in E(H)$ such that $v_1v_0 \in E(G)$ but $v_1w_0\not\in V(G)$.  Let $w_1\in V(G_{k-1})$ be a neighbor of $w_0$, and let $t$ be the least integer such that $G$ has paths $v_0v_1\cdots v_tx$ and $w_0w_1\cdots w_tx$ with $v_j,w_j\in V(G_{k-j})$ and $x\in V(G_{k-(t+1)})$.  By the minimality of $t$, we have that $v_iw_{i+1},w_iv_{i+1}\not\in E(G)$ for $1\le j < t$.  By the minimality of $k$, we have that $v_iw_i\not\in E(G)$ for $1\le i < t$.  It follows that $v_0v_1\cdots v_txw_t w_{t-1}\cdots w_0$ is a $(2t+3)$-cycle $C$ in which the chords are a subset of $\{v_0w_1,v_tw_t\}$.  If $t\ge 2$, then either $C$ is an induced odd cycle of length at least $7$, or either chord completes an induced copy of the bull.  If $t=1$, then $C$ is a $5$-cycle in which all chords are incident to $w_1$, and so $V(C)$ induces a graph in $\{C_5,C_5^+,P_5^2\}$.

Next, we claim that if $H$ is a component of $G_k$ with $k\ge 2$ and $x\in V(G_{k+1})$, then $x$ is adjacent to all vertices in $H$ or none of them.  Otherwise, we again obtain $vw\in E(H)$ such that $xv \in E(G)$ but $xw \not\in E(G)$.  Let $y\in G_{k-1}$ be a neighbor of $v$; also $yw\in E(H)$.  Let $z\in V(G_{k-2})$ be a neighbor of $y$, and observe that $\{v,w,x,y,z\}$ induces a copy of the bull.

We claim that $G_k$ has no edges for $k\ge 2$.  Let $H$ be a component of $G_k$, and let $x\in V(G_{k-1})$ contain $H$ in its neighborhood.  Note that $H$ is $P_4$-free, or else an induced copy of $P_4$ in $H$ together with $x$ yields an induced copy of $P_5^2$.  It follows from Proposition~\ref{prop:P4-twins} that either $H=K_1$ or $H$ contains a pair of twins.  Since $k\ge 2$, a pair of twins in $H$ would also be a pair of twins in $G$, and so $H=K_1$ as claimed.  

Note that $G_1$ also has no edges.  Indeed, if $z$ is a vertex at maximum distance $k$ from $u$, then $k\ge 3$ and $z$ is an isolated vertex in $G_k$.  It follows that all neighbors of $z$ are isolated vertices in $G_{k-1}$ and therefore $N(z)$ is an independent set.  If $N(u)$ were not independent, then we would prefer $z$ to $u$ in our initial selection of $u$.  It follows that $N(u)$ is an independent set, and $V(G_1) = N(u)$.  Since each $G_k$ has no edges, it follows that $G$ is a bipartite graph, contradicting that $G$ is a counter-example.  Therefore \ref{main:3} implies \ref{main:4}.

It remains to show that \ref{main:4} implies \ref{main:1}.  By Proposition~\ref{prop:biop}, every bipartite graph is online-perfect.  By Proposition~\ref{prop:twin}, cloning vertices preserves online-perfection.  
\end{proof}

A graph $G$ is \emph{perfect} if $\chi(H) = \omega(H)$ for each induced subgraph $H$ of $G$.  The Strong Perfect Graph Theorem of Chudnovsky, Robertson, Seymour, and Thomas~\cite{SPGT} states that $G$ is perfect if and only if $G$ does not contain an odd hole or an odd antihole.  A \emph{hole} is an induced cycle of length at least $4$, and an \emph{antihole} is an induced subgraph whose complement is a hole.  Note that a set of $5$ consecutive vertices on a cycle of length larger than $5$ induces a copy of $P_5$, and $\overline{P_5} = C_5^+$.  It follows that an antihole on more than $5$ vertices contains an induced copy of $C_5^+$.  By \ref{main:3}, odd holes and odd antiholes are forbidden in online-perfect graphs, and so every online-perfect graph is perfect.

It also follows from \ref{main:4} and Proposition~\ref{prop:P4-twins} that $G$ is online-perfect if and only if $G$ is obtainable from a bipartite graph by replacing each vertex with a copy of a $P_4$-free graph.

\section{Minimal graphs that are not online-perfect}

When $G$ is not online-perfect, it contains an induced copy of one of the obstructions from Theorem~\ref{thm:online-perfect}.  It is natural to study the value of the online-coloring game for these graphs; although these are the simplest graphs that are not online-perfect, we are only able to establish the asymptotics for the odd cycles and $P_5^2$.  We begin with a strategy for Algorithm that generalizes Proposition~\ref{prop:biop}.

A \emph{$(p,q)$-coloring} of a graph $G$ is an assignment $\phi$ that maps each vertex in $G$ to a set of $q$ colors from a universe of $p$ colors such that $\phi(u)$ and $\phi(v)$ are disjoint when $uv\in E(G)$.  Usually, we use $[p]$ for the universe of colors and write $\phi\st V(G)\to\binom{[p]}{q}$, where $\binom{[p]}{q}$ denotes the family of subsets of $[p]$ of size $q$.  The \emph{fractional chromatic number} of $G$, denoted $\chi_f(G)$, is the infimum of $p/q$ over all pairs $(p,q)$ such that $G$ has a $(p,q)$-coloring.  There is an equivalent alternative formulation of the fractional chromatic number as the optimum value of a linear program involving the independent set polytope of $G$, and it follows that each graph $G$ has a $(p,q)$-coloring such that $\chi_f(G) = p/q$.

\begin{proposition}[Fractional Coloring Strategy]
If $G$ has a $(p,q)$-coloring, then $\val{w}{G}\le p\ceil{\frac{w}{2q}}$.  Consequently, $\val{w}{G}\le (\frac{\chi_f(G)}{2} + o(1))w$.
\end{proposition}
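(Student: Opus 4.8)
The plan is to reduce the claim to a cleaner statement by first scaling up the $(p,q)$-coloring, and then to exhibit an explicit Algorithm strategy that generalizes the prefix/suffix idea of Proposition~\ref{prop:biop}. For the scaling step: given a $(p,q)$-coloring $\phi\st V(G)\to\binom{[p]}{q}$ and an integer $k\ge 1$, the map $v\mapsto\phi(v)\times[k]$ is a $(pk,qk)$-coloring, since $\phi(u)\cap\phi(v)=\emptyset$ implies $(\phi(u)\times[k])\cap(\phi(v)\times[k])=\emptyset$. I would take $k=\ceil{\frac{w}{2q}}$, obtaining a $(P,Q)$-coloring $\psi$ with $P=pk$ and $Q=qk\ge\frac w2$, so that $2Q\ge w$. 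It then suffices to prove: \emph{if $G$ has a $(P,Q)$-coloring with $2Q\ge w$, then $\val{w}{G}\le P$}; applied to $\psi$ this yields $\val{w}{G}\le P=p\ceil{\frac{w}{2q}}$. (If $G$ has no edge it is bipartite and the bound is immediate from Proposition~\ref{prop:biop}; otherwise $G$ has an edge, so $P\ge 2Q\ge w$, which we use below.)

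For the reduced statement, fix the $(P,Q)$-coloring $\psi$ and an arbitrary linear order on the color set $[P]$. Algorithm plays as follows: when the $t$-th token is placed on a vertex $v$, Algorithm uses the $t$-th smallest color of $\psi(v)$ if $t\le Q$, and the $(t-Q)$-th largest color of $[P]\setminus\psi(v)$ if $t>Q$. Since $\psi(v)$ and $[P]\setminus\psi(v)$ partition $[P]$, consecutive tokens on $v$ receive distinct colors, and since $t\le w\le P$ there are always enough colors; so tokens sharing a vertex are colored properly. The point is properness across an edge $uv$. The tokens on $u$ together with those on $v$ form a clique in the token graph, so if $t_u,t_v$ denote the numbers of tokens on $u,v$ then $t_u+t_v\le w$; assume $t_u\le t_v$, so $t_u\le w/2\le Q$ and every token on $u$ gets one of the $t_u$ smallest colors of $\psi(u)$. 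Tokens on $v$ placed in the first phase ($t\le Q$) use colors of $\psi(v)$, which is disjoint from $\psi(u)$; any remaining tokens on $v$ use the $t_v-Q$ largest colors of $[P]\setminus\psi(v)\supseteq\psi(u)$. A conflict would require a color $c\in\psi(u)$ lying both among the $t_u$ smallest of $\psi(u)$ and among the $t_v-Q$ largest of $[P]\setminus\psi(v)$; but the latter set is upward closed in $[P]\setminus\psi(v)$, so all $\ge Q-t_u+1$ colors of $\psi(u)$ that are $\ge c$ would lie in it, forcing $t_v-Q\ge Q-t_u+1$, i.e.\ $t_u+t_v\ge 2Q+1>w$, a contradiction.

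Finally, the ``consequently'' clause follows by choosing $(p,q)$ with $p/q=\chi_f(G)$, which is possible since each graph admits such a coloring (as noted above): then $\val{w}{G}\le p\ceil{\frac{w}{2q}}\le\frac{p}{2q}w+p=\frac{\chi_f(G)}{2}w+p$, and $p$ is a constant, so $\val{w}{G}\le(\frac{\chi_f(G)}{2}+o(1))w$. I expect the only real subtlety to be the counting in the second paragraph; the key realization is that one should scale the coloring up first so that each ``home set'' $\psi(v)$ has size at least $w/2$ --- this is exactly what eliminates any round-robin/ceiling losses and lets the single inequality $t_u+t_v\le w\le 2Q$ do all the work. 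The factor $\frac12$ (hence $\frac{\chi_f}{2}$ in the conclusion) appears precisely because the ``prefix'' used by the small endpoint of an edge and the ``suffix'' incursion of the large endpoint collide only once $t_u+t_v$ exceeds $2Q$.
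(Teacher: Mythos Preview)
Your proof is correct and follows essentially the same approach as the paper: both use the ``prefix from $\phi(v)$, then suffix from the complement in reverse'' strategy, and your scaling step (replacing a $(p,q)$-coloring by a $(pk,qk)$-coloring with $k=\ceil{w/(2q)}$) is exactly the paper's device of replacing each of the $p$ colors by a block $S_j$ of $t=\ceil{w/(2q)}$ real colors. Your presentation is arguably a bit cleaner --- reducing first to the normalized case $2Q\ge w$ with a single global linear order lets you avoid the paper's case analysis on whether the conflicting color lies in $F_u$, $R_u$, $F_v$, or $R_v$, replacing it with the single WLOG $t_u\le t_v\le Q$ and one counting inequality --- but the underlying idea is identical.
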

\begin{proof}
Let $\phi\st V(G)\to\binom{[p]}{q}$ be a $(p,q)$-coloring, where $\chi_f(G)=p/q$.  For a list $L$, let $L^r$ denote the list obtained by reversing $L$.  Also, let $L_1\cdot L_2$ be the concatenation of lists $L_1$ and $L_2$, and when $I$ is a set of integer indices, let $\prod_{i\in I} L_i$ denote the concatenation of lists where entries in $L_i$ come before $L_j$ if and only if $i<j$. 

Let $t=\ceil{w/(2q)}$ and let $S_1, \ldots, S_p$ be disjoint lists of $t$ colors.  For each vertex $u$, the \emph{forward list} at $u$, denoted $F_u$, is $[\prod_{j\in A} S_j]$ and the \emph{reverse list} at $u$, denoted $R_u$, is $[\prod_{j\in B} S_j]^r$, where $A=\phi(u)$ and $B=[p]-\phi(u)$.  Let $L_u = F_u \cdot R_u$.  Algorithm maintains the invariant that, for each vertex $u$, the tokens played at $u$ are colored with a prefix of $L_u$.  When Spoiler plays a token $x$ at $u$, Algorithm assigns to $x$ the first color in $L_u$ that is not already used for a token at $u$.  Provided that Algorithm produces a proper coloring of the token graph, we have $\val{w}{G} \le tp = \ceil{\frac{w}{2q}}p < (\frac{w}{2q} + 1)p = (\frac{p}{2q} + \frac{p}{w})w = (\chi_f(G)/2 + o(1))w$.

Suppose for a contradiction that a token $x$ is played at $u$ but its designated color $\alpha$ already appears on a token at a neighbor $v$.  Since $\phi(u)$ and $\phi(v)$ are disjoint, the colors in $F_u$ appear in $R_{v}$ in reverse order, and colors in $F_{v}$ appear in $R_u$ in reverse order.  We claim that all colors in $F_u\cup F_{v}$ appear on tokens at $u$ or $v$.  Note that all colors preceding $\alpha$ in $F_u \cdot R_u$ appear on tokens at $u$.  If $\alpha$ is in $F_u$, then colors in $F_u$ following $\alpha$ appear before $\alpha$ in $R_{v}$.  By the prefix invariant at $v$, all colors preceding $\alpha$ in $F_{v} \cdot R_{v}$ appear on tokens at $v$; these colors include $F_{v}$ as well as those in $F_u$ following $\alpha$.  Otherwise, $\alpha \in R_u$, and so every color in $F_u$ appears on tokens at $u$.  If $\alpha \in R_{v}$, then every color in $F_{v}$ appears on tokens at $v$.  If $\alpha \in F_{v}$, then colors in $F_{v}$ preceding $\alpha$ appear at $v$ and colors in $F_{v}$ following $\alpha$ appear before $\alpha$ in $R_u$ and therefore are used on tokens at $u$.

In all cases, since $|F_u| = |F_{v}| = qt$, it follows that at least $2qt$ tokens other than $x$ have been played at $u$ and $v$.  These tokens form a clique in the token graph, and so $w > 2qt$, contradicting our choice of $t$.
\end{proof}

For a graph $G$ and a set of vertices $S\subseteq G$, we use $G[S]$ to denote the subgraph of $G$ induced by $S$.  

\begin{proposition}[Spoiler Strategy]
Let $U \subseteq V(G)$ where $U = \{u_1,...,u_t\}$, and suppose that
\begin{enumerate}
\item there are vertices $x$ and $y$ such that $u_1xyu_t$ is a path and $G[\{u_1,x,y,u_t\}]$ is bipartite, and
\item there is a common neighbor $z_i$ of $u_i$ and $u_{i+1}$ such that $G[U \cup \{z_i\}]$ is bipartite for $1\le i < t$.
\end{enumerate}
If $w$ is even, then $\val{w}{G} \geq (1 + \frac{1}{2t})w$.\\
\end{proposition}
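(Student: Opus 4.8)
The plan is to describe an explicit (adaptive) strategy for Spoiler, generalizing the strategy used for odd cycles. Since $w$ is even, set $k = \lceil w/(2t)\rceil$, so $k \ge w/(2t)$; Spoiler will force Algorithm to use at least $w+k$ colors, and $w+k \ge (1+\frac{1}{2t})w$. It is convenient to read the hypothesis as saying that $G$ contains an odd closed walk $u_1, z_1, u_2, z_2, \ldots, z_{t-1}, u_t, y, x, u_1$ of length $2t+1$; conditions~(1) and~(2) guarantee that the relevant subgraphs spanned by stretches of this walk are bipartite, which is what keeps the chromatic number of the token graph at most $w$. Because the token graph only grows during the game, it suffices to ensure that the \emph{final} token graph produced by Spoiler's strategy is $w$-colorable.

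The first part of the strategy is a \emph{setup phase} that establishes $t$ pairwise disjoint ``banks'' $A_1, \ldots, A_t$ of $k$ colors each, with the colors of $A_i$ appearing on the (exactly) $k$ tokens at $u_i$. Bank $A_1$ is obtained by playing $k$ tokens at $u_1$: they form a clique and hence receive $k$ distinct colors. To build $A_{i+1}$ Spoiler must force the tokens played at $u_{i+1}$ to avoid every color used so far; since $u_{i+1}$ is non-adjacent to $u_1,\ldots,u_i$, Spoiler first ``routes'' the existing banks so that a neighbor of $u_{i+1}$ carries $A_1 \cup \cdots \cup A_i$, using the common neighbor $z_{i+1}$ of $u_{i+1},u_{i+2}$ (or, for $i+1=t$, the vertex $y$ from the path $u_1xyu_t$ of condition~(1)) and playing a bounded number of auxiliary tokens, keeping every vertex below about $w/2$ tokens. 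At the end of the setup only the vertices $u_1,\ldots,u_t,z_1,\ldots,z_{t-1},x,y$ hold tokens, the vertex $z_1$ holds none, and by conditions~(1) and~(2) these span bipartite subgraphs, so the token graph is a blow-up of a bipartite graph with small multiplicities and remains $w$-colorable.

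Then Spoiler plays the \emph{finale}: play $w-k$ tokens at $z_1$. These tokens form a clique, are forced to avoid the $2k$ colors of $A_1 \cup A_2$ (the banks on the neighbors $u_1,u_2$ of $z_1$), and can reuse at most the $(t-2)k$ colors of the remaining banks, so Algorithm must introduce $(w-k)-(t-2)k = w-(t-1)k$ new colors. Together with the $tk$ bank colors this yields $tk + (w-(t-1)k) = w+k$ colors in all. The relevant inequalities, $w-k\le w$ (so the finale clique has size at most $w$) and $(t-1)k\le w$ (so the number of new colors is nonnegative), follow from $k = \lceil w/(2t)\rceil$, and one checks that adding $w-k$ tokens at $z_1$ keeps the final token graph $w$-colorable, again using the bipartiteness of the pieces.

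The step I expect to be the main obstacle is making the routing robust. Algorithm need not play greedily, and a naive transfer can be defeated: Algorithm may ``waste'' a fresh color at an intermediate vertex precisely to break the chain of forced reuses, after which it recovers and the color count settles at only $w$. The strategy therefore has to be genuinely adaptive, and the bookkeeping must show that every deviation from the anticipated reuse pattern only increases the eventual number of colors, so that Spoiler can always reach $w+k$ (if necessary by switching to an alternate finale, for instance at a vertex $x$ or $y$ whenever some bank fails to consolidate). The other point requiring care is the running verification that the token graph --- a blow-up of the bipartite subgraphs supplied by conditions~(1) and~(2), carrying one large block of $w-k$ tokens at $z_1$ --- has chromatic number at most $w$; the oddness of the length-$(2t+1)$ walk is exactly the feature Algorithm cannot exploit when trying to stay within $w$ colors, and is what forces the extra $k$.
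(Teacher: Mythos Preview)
Your proposal outlines an approach that is quite different from the paper's, and the part you flag as the ``main obstacle'' is in fact a genuine gap that is not easily closed under the stated hypotheses.  The routing step asks that Spoiler force the $k$ tokens at $u_{i+1}$ to receive colors disjoint from $A_1\cup\cdots\cup A_i$, but the only guaranteed adjacency to $u_{i+1}$ is through $z_i$ (and $z_{i+1}$), and $z_i$ is not adjacent to $u_1,\ldots,u_{i-1}$.  Concretely, already for $t=2$ (e.g.\ $C_5$): if Spoiler plays $k$ tokens at $u_1$ obtaining bank $A_1$, then plays at a neighbor of $u_2$ hoping to transfer $A_1$, Algorithm may use fresh colors there and then reuse $A_1$ itself at $u_2$, so $A_2=A_1$.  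Your last two paragraphs acknowledge this failure mode but do not supply the adaptive strategy that would repair it, and it is not clear one exists within the bipartiteness hypotheses given; the ``switch to an alternate finale'' idea would itself need a case analysis comparable in strength to a direct proof.

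The paper's argument sidesteps routing entirely.  With $k=w/2$, Spoiler first plays $k$ tokens at $u_1$ and $k$ tokens at $u_t$, obtaining color sets $S_1$ and $S_t$, and then branches on $|S_1\cap S_t|$.  If $|S_1\cap S_t|\ge k/t$, Spoiler plays $k$ tokens at each of $x$ and $y$; the $2k$ colors at $x$ and $y$ are disjoint from each other and from $S_1\cap S_t$, forcing $2k+k/t$ colors, and the token graph sits inside the bipartite graph $G[\{u_1,x,y,u_t\}]$.  Otherwise $|S_1\setminus S_t|>k-k/t$; Spoiler plays $k$ tokens at each remaining $u_i$, and since $S_1\setminus S_t\subseteq\bigcup_{i<t}(S_i\setminus S_{i+1})$, some $i$ has $|S_i\setminus S_{i+1}|>k/t$.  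Spoiler finishes with $k$ tokens at $z_i$, forcing $2k+k/t$ colors inside the bipartite graph $G[U\cup\{z_i\}]$.  No disjoint banks are ever needed: the proof exploits overlap (or its absence) between just two color sets, and pigeonhole does the rest.
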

\begin{proof}
With $k = w/2$, Spoiler plays $k$ tokens at $u_1$ and $u_t$.  Let $S_i$ be the set of colors at $u_i$.  If $|S_1 \cap S_t| \geq k/t$ then Spoiler plays $k$ tokens at each of $\{x,y\}$. This forces $2k + k/t$ colors.  Since the subgraph of $G$ induced by $\{u_1,x,y,u_t\}$ is bipartite, the chromatic number of the token graph is at most $2k$.
 
Otherwise $|S_1 \cap S_t| < k/t$ and $|S_1 - S_t| > k - k/t$.  Spoiler plays $k$ tokens at each $u_i$ for $1 < i < t$.  Since $S_1 - S_t$ is contained in $\bigcup_{i<t} S_i - S_{i+1}$, it follows that for some $i$, we have $|S_i - S_{i+1}| \geq \frac{|S_1 - S_t|}{(t-1)} > \frac{k}{t}$. Spoiler plays $k$ tokens at $z_i$.  Algorithm uses disjoint sets of $k$ colors at $z_i$ and $u_{i+1}$, plus at least $k/t$ additional colors at $u_i$; this forces at least $2k+k/t$ colors in total.  Since the subgraph of $G$ induced by $U\cup\{z_i\}$ is bipartite, the token graph has chromatic number at most $2k$ in this case also.
\end{proof}
 
\begin{corollary}[Odd Holes]\label{cor:oddhole}
Let $n$ be odd and at least $5$.  We have $\frac{n}{n-1}\cdot 2\floor{\frac{w}{2}} \le \val{w}{C_n} \le n\ceil{\frac{w}{n-1}}$ and $\val{w}{C_n} = \frac{n}{n-1} w$ when $n-1$ divides $w$.  Therefore $\val{w}{C_n} = (\frac{n}{n-1} + o(1))w$, where the $o(1)$ term tends to $0$ for each fixed $n$ as $w\to\infty$.
\end{corollary}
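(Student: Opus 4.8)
The plan is to derive both inequalities from the two preceding propositions and then finish with a short calculation.

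\textbf{Upper bound.} I will exhibit an explicit $(n,\tfrac{n-1}{2})$-coloring of $C_n$ and apply the Fractional Coloring Strategy with $p=n$ and $q=\tfrac{n-1}{2}$, so that $2q=n-1$ and the bound becomes $\val{w}{C_n}\le n\ceil{\frac{w}{n-1}}$. Write $V(C_n)=\ZZ_n$ with $i$ adjacent to $i\pm 1 \pmod n$, and set $k=\tfrac{n-1}{2}$, so $n=2k+1$. Define $\phi(i)=\{\,ik+j \bmod n : 0\le j\le k-1\,\}$. Each $\phi(i)$ is a block of $k<n$ consecutive residues, and $\phi(i)\cup\phi(i+1)$ is a block of $2k=n-1<n$ consecutive residues; hence $\phi(i)$ and $\phi(i+1)$ are disjoint, so $\phi$ is a valid $(n,k)$-coloring. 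The Fractional Coloring Strategy now gives the upper bound.

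\textbf{Lower bound.} I will apply the Spoiler Strategy with $t=\tfrac{n-1}{2}$, for which $1+\tfrac{1}{2t}=\tfrac{n}{n-1}$. Take $U=\{u_1,\dots,u_t\}$ with $u_i=2(i-1)$, so $u_1=0$ and $u_t=n-3$. For $1\le i<t$, the vertex $z_i=2i-1$ is a common neighbor of $u_i$ and $u_{i+1}$, and the four consecutive vertices $u_1,\,n-1,\,n-2,\,u_t$ form a path, giving hypothesis (i) with $x=n-1$, $y=n-2$. Since $n\ge 5$, every set of at most $t+1=\tfrac{n+1}{2}<n$ vertices of $C_n$ induces a disjoint union of paths and is therefore bipartite; in particular $C_n[U\cup\{z_i\}]$ and $C_n[\{u_1,x,y,u_t\}]$ are bipartite, so both hypotheses hold. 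For even $w$, the Spoiler Strategy yields $\val{w}{C_n}\ge \tfrac{n}{n-1}w$. For arbitrary $w$, the same strategy run with the even width $2\floor{w/2}\le w$ is legal in the width-$w$ game, since it never creates a clique of more than $2\floor{w/2}$ tokens in the token graph, and it forces $\tfrac{n}{n-1}\cdot 2\floor{w/2}$ colors; this establishes the stated lower bound for all $w$.

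\textbf{Combining.} We now have $\tfrac{n}{n-1}\cdot 2\floor{w/2}\le \val{w}{C_n}\le n\ceil{\frac{w}{n-1}}$. When $n-1\mid w$, then (as $n$ is odd, $n-1$ is even) $w$ is even, whence $2\floor{w/2}=w$ and both bounds equal $\tfrac{n}{n-1}w$; thus $\val{w}{C_n}=\tfrac{n}{n-1}w$. In general $\tfrac{n}{n-1}(w-1)\le \val{w}{C_n}\le \tfrac{n}{n-1}w+n$, so for each fixed $n$ we obtain $\val{w}{C_n}=\tfrac{n}{n-1}w+O(1)=(\tfrac{n}{n-1}+o(1))w$ as $w\to\infty$. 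I do not expect a substantial obstacle: the proof is a direct instantiation of the two propositions. The only points requiring care are verifying that the displayed $\phi$ really is a $(n,\tfrac{n-1}{2})$-coloring, checking the two bipartiteness hypotheses of the Spoiler Strategy for the chosen $U$ (immediate, since proper induced subgraphs of a cycle are forests), and handling odd $w$ in the lower bound, for which it suffices to note that an even-width Spoiler strategy is legal at any larger width.
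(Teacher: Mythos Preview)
Your proof is correct and follows essentially the same approach as the paper: apply the Fractional Coloring Strategy with an $(n,\tfrac{n-1}{2})$-coloring of $C_n$ for the upper bound, and apply the Spoiler Strategy with $t=(n-1)/2$ and the $u_i$ spaced at distance~$2$ along the cycle for the lower bound. You supply more explicit detail (the specific $(n,k)$-coloring $\phi$, the explicit $z_i$, $x$, $y$, and the verification of bipartiteness), but the structure is identical to the paper's argument; the only cosmetic point is that for odd $w$ it is cleaner to invoke the monotonicity $\val{w}{C_n}\ge\val{w'}{C_n}$ directly rather than phrase it in terms of cliques.
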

\begin{proof}
Let $w' = 2\floor{\frac{w}{2}}$, so that $w'$ is the largest even integer at most $w$.  With $t=(n-1)/2$ and $u_1, \ldots, u_t$ spaced along $C_n$ so that $u_i$ and $u_{i+1}$ are at distance $2$ for $1\le i < t$, the Spoiler Strategy gives $\val{w}{C_n} \ge \val{w'}{C_n} \ge (1 + \frac{1}{2t})w' = \frac{n}{n-1}\cdot 2\floor{\frac{w}{2}}$.  Since $C_n$ has an $(n,(n-1)/2)$-coloring, the Algorithm Strategy gives $\val{w}{C_n} \le n\ceil{\frac{w}{n-1}}$.  When $n$ is odd and $n-1$ divides $w$, the bounds reduce to $\val{w}{C_n} = \frac{n}{n-1} w$.
\end{proof}

\begin{corollary}\label{cor:C5p}
We have $\frac{5}{2}\floor{\frac{w}{2}} \le \val{w}{C_5^+} \le  3\ceil{\frac{w}{2}}$.
\end{corollary}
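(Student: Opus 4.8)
The plan is to get both bounds directly from the two strategies just proved, with essentially no new work beyond identifying the right structure inside $C_5^+$. Throughout I will label $V(C_5^+)=\{v_1,\ldots,v_5\}$ so that $v_1v_2v_3v_4v_5v_1$ is the spanning $5$-cycle and $v_1v_3$ is the extra chord; thus $\{v_1,v_2,v_3\}$ is a triangle and $v_1v_5v_4v_3$ is an induced path of length $3$.

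For the upper bound I would use the Fractional Coloring Strategy with an ordinary proper coloring. Since $C_5^+$ contains a triangle, $\chi(C_5^+)\ge 3$, and a direct check produces a proper $3$-coloring (for instance color the triangle $v_1,v_2,v_3$ with $1,2,3$, then $v_5$ with $2$ and $v_4$ with $1$), i.e.\ a $(3,1)$-coloring. Feeding $(p,q)=(3,1)$ into the Fractional Coloring Strategy gives $\val{w}{C_5^+}\le p\ceil{\frac{w}{2q}}=3\ceil{\frac{w}{2}}$, which is exactly the claimed upper bound. (Because of the triangle, $\chi_f(C_5^+)=3$, so this is also the best the fractional strategy can do here.)

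For the lower bound I would apply the Spoiler Strategy with $t=2$, which yields $\val{w}{G}\ge(1+\tfrac{1}{2t})w=\tfrac54 w$ for even $w$, provided the two hypotheses hold for a suitable $U=\{u_1,u_2\}$. Take $u_1=v_2$ and $u_2=v_4$. For hypothesis (ii), the vertex $z_1=v_3$ is a common neighbor of $v_2$ and $v_4$, and since $v_2v_4\notin E(C_5^+)$ the set $\{v_2,v_3,v_4\}$ induces a path, hence is bipartite. For hypothesis (i), $v_2v_1v_5v_4$ is a path, and the set $\{v_1,v_2,v_4,v_5\}$ induces a $P_4$ — the chord $v_1v_3$ does not meet this set — hence is bipartite. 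So the Spoiler Strategy applies and gives $\val{w}{C_5^+}\ge\tfrac54 w$ whenever $w$ is even. For general $w$, set $w'=2\floor{\frac{w}{2}}$, the largest even integer at most $w$; since a wider game can only help Spoiler (Spoiler may simply not use the extra budget), $\val{w}{C_5^+}\ge\val{w'}{C_5^+}\ge\tfrac54 w'=\tfrac52\floor{\frac{w}{2}}$.

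I do not expect a genuine obstacle: the corollary is a packaging of the Algorithm and Spoiler propositions for the specific graph $C_5^+$. The only points needing care are verifying that $C_5^+$ really does contain, at the same time, an induced odd $u_1u_2$-path and a common neighbor of a non-adjacent pair whose addition keeps the relevant induced subgraphs bipartite (so that $t=2$ is legitimate), that the chromatic number is $3$ so the target $3\ceil{\frac{w}{2}}$ is met, and the routine monotonicity step that passes from even width $w'$ to arbitrary width $w$.
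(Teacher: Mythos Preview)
Your proof is correct and is essentially the same as the paper's: both apply the Spoiler Strategy with $t=2$ using a nonadjacent pair of degree-$2$ vertices of $C_5^+$ (your $v_2,v_4$), pass to general $w$ via monotonicity in $w$, and obtain the upper bound from the Fractional Coloring Strategy with a $(3,1)$-coloring. Your write-up simply makes the hypothesis check for the Spoiler Strategy explicit.
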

\begin{proof}
Let $w' = 2\floor{\frac{w}{2}}$, so that $w'$ is the largest even integer at most $w$. Let $u_1$ and $u_2$ be two nonadjacent vertices of degree two belonging to $C_5^+$.  The Spoiler Strategy gives $\val{w'}{C_5^+} \ge \val{w'}{C_5^+} \ge \frac{5}{4}w' = \frac{5}{2}\floor{\frac{w}{2}}$. Since $C_5^+$ has a $(3,1)$-coloring, the Algorithm Strategy gives $\val{w}{C_5^+} \le 3\ceil{\frac{w}{2}}$.
\end{proof}

\begin{corollary}\label{cor:bull}
We have $\frac{7}{3}\floor{\frac{w}{2}} \le \val{w}{B} \le 3\ceil{\frac{w}{2}}$.
\end{corollary}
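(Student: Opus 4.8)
The plan is to prove both inequalities by direct appeal to the two strategy propositions, exactly as in the proofs of Corollaries~\ref{cor:oddhole} and~\ref{cor:C5p}. For the upper bound, note that although $B$ contains a triangle, it has chromatic number $3$: color the triangle with $\{1,2,3\}$, and since each horn of the bull has a unique neighbor, each horn has a free color. Thus $B$ has a $(3,1)$-coloring, and the Fractional Coloring Strategy with $p=3$ and $q=1$ gives $\val{w}{B}\le 3\ceil{\frac{w}{2}}$.

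For the lower bound I would apply the Spoiler Strategy with $t=3$. Label the vertices of $B$ as $v_1,\dots,v_5$ along its spanning path, so that $v_2v_3v_4$ is the triangle and $v_1,v_5$ are the horns, with $v_1$ adjacent only to $v_2$ and $v_5$ adjacent only to $v_4$. Set $U=\{u_1,u_2,u_3\}=\{v_1,v_3,v_5\}$. For hypothesis~(i) take $x=v_2$ and $y=v_4$: then $u_1xyu_3=v_1v_2v_4v_5$ is a path (using the chord $v_2v_4$), and $G[\{v_1,v_2,v_4,v_5\}]$ is exactly this path, hence bipartite. For hypothesis~(ii) take $z_1=v_2$, a common neighbor of $u_1=v_1$ and $u_2=v_3$, and $z_2=v_4$, a common neighbor of $u_2=v_3$ and $u_3=v_5$; then $G[U\cup\{z_1\}]=G[\{v_1,v_2,v_3,v_5\}]$ is the path $v_1v_2v_3$ together with the isolated vertex $v_5$, and $G[U\cup\{z_2\}]=G[\{v_1,v_3,v_4,v_5\}]$ is the path $v_3v_4v_5$ together with the isolated vertex $v_1$, both bipartite. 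With $t=3$ the Spoiler Strategy then gives $\val{w'}{B}\ge(1+\frac{1}{6})w'=\frac{7}{6}w'$ for even $w'$. Taking $w'=2\floor{\frac{w}{2}}$, the largest even integer at most $w$, and using that $\val{w}{B}$ is nondecreasing in $w$ (a Spoiler strategy legal for width $w'$ is legal for width $w\ge w'$), we conclude $\val{w}{B}\ge\val{w'}{B}\ge\frac{7}{6}\cdot 2\floor{\frac{w}{2}}=\frac{7}{3}\floor{\frac{w}{2}}$.

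There is no real obstacle here beyond the bookkeeping of choosing $U$ so that the required common neighbors exist and the induced four-vertex subgraphs all avoid the triangle $v_2v_3v_4$. In fact, since each horn of $B$ has a unique neighbor, the choice $U=\{v_1,v_3,v_5\}$ is essentially forced once we insist that $u_1$ and $u_3$ be the horns, and a direct check shows it satisfies both hypotheses; the only alternative is to not use both horns, which does not appear to improve the bound.
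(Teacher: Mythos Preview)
Your proof is correct and follows the same approach as the paper: the upper bound comes from the Fractional Coloring Strategy via a $(3,1)$-coloring of $B$, and the lower bound comes from the Spoiler Strategy with $t=3$ and $U=\{v_1,v_3,v_5\}$ (the paper phrases this as ``$u_1,u_3$ the vertices of degree~$1$ and $u_2$ the vertex of degree~$2$''). Your explicit verification of the two hypotheses of the Spoiler Strategy is more detailed than the paper's, but the argument is identical.
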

\begin{proof}
Let $w' = 2\floor{\frac{w}{2}}$, so that $w'$ is the largest even integer at most $w$. In the bull $B$, let $u_1$ and $u_3$ be vertices of degree 1 and let $u_2$ be the vertex of degree 2.  The Spoiler Strategy gives $\val{w}{B} \ge \val{w'}{B}\ge \frac{7}{6}w' = \frac{7}{3} \floor{\frac{w}{2}}$. Since $B$ has a $(3,1)$-coloring, the Algorithm Strategy gives $\val{w}{B} \le 3\ceil{\frac{w}{2}}$.
\end{proof}

\section{Asymptotics for $P_5^2$}
 
Applying the argument of Corollary~\ref{cor:C5p} to $P_5^2$ shows that $\val{w}{P_5^2}$ is bounded by $(1-o(1))\frac{5}{4}w$ from below and $(1+o(1))\frac{3}{2}w$ from above.  In this section, we improve Algorithm's strategy to obtain $\val{w}{P_5^2}$ exactly when $w$ is even.

\begin{theorem}\label{thm:P52}
We have $\frac{5}{2}\floor{\frac{w}{2}} \le \val{w}{P_5^2} \le \frac{5w+2}{4}$ for each $w$.  Consequently, when $w$ is even, we have $\val{w}{P_5^2} = \ceil{\frac{5}{4}w}$.
\end{theorem}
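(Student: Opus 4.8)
The statement bundles a lower bound, an upper bound, and an arithmetic consequence; I would prove the two bounds separately and then observe that for even $w$ one has $\ceil{\frac{5w}{4}} = \frac52\floor{\frac w2} \le \val{w}{P_5^2} \le \floor{\frac{5w+2}{4}} = \ceil{\frac{5w}{4}}$, which forces $\val{w}{P_5^2}=\ceil{\frac{5w}{4}}$.

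For the lower bound I would reuse the Spoiler Strategy verbatim, exactly as in Corollary~\ref{cor:C5p}. Write the vertices of $P_5^2$ as $v_1,\dots,v_5$ with $v_3$ the universal vertex and $v_1v_2v_4v_5$ the induced $P_4$. Apply the Spoiler Strategy with $t=2$ and $U=\{v_1,v_5\}$: the path $v_1v_2v_4v_5$ satisfies hypothesis~(i) since $P_4$ is bipartite, and $v_3$ is a common neighbor of $v_1$ and $v_5$ with $G[\{v_1,v_3,v_5\}]$ a bipartite $P_3$, satisfying hypothesis~(ii). Hence $\val{w}{P_5^2}\ge\val{w'}{P_5^2}\ge(1+\tfrac14)w'=\tfrac52\floor{\tfrac w2}$ with $w'=2\floor{w/2}$.

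The substance is the upper bound: an Algorithm strategy using at most $\floor{\frac{5w+2}{4}}$ colors. Since the Fractional Coloring Strategy already yields roughly $\frac32 w$, the improvement must exploit the structure, and the point is that the only genuinely dangerous situations are those in which Spoiler first loads the two nonadjacent hub-neighbors $v_1$ and $v_5$ with about $w/2$ tokens each and then attacks either through $v_3$ (whose new tokens need colors outside $C_1\cup C_2\cup C_4\cup C_5$, where $C_i$ is the current color set on $v_i$) or through the path middles $v_2,v_4$ (where $C_2\cap C_1=\emptyset$, $C_4\cap C_5=\emptyset$, and $C_2\cap C_4=\emptyset$). To defeat both, Algorithm must deliberately make $C_1$ and $C_5$ overlap in a controlled way — roughly, keep $|C_1\cap C_5|$ near $(\min(a_1,a_5)-\tfrac w4)^+$ at all times — which is exactly the amount that a short computation shows survives either attack when the palette has $\ceil{\frac54 w}$ colors. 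Concretely I would fix a linear order on the $\floor{\frac{5w+2}{4}}$ colors, color the $P_4$ on $\{v_1,v_2,v_4,v_5\}$ in bipartite fashion ($\{v_1,v_4\}$ filling low colors, $\{v_2,v_5\}$ filling high colors) but perturb the choices for $v_5$ so as to realize the prescribed overlap with $C_1$, let $v_3$ fill whatever middle gap remains and spill into a reserve of about $\frac w4$ colors, and track a numeric invariant that pins the realized overlaps so that in every reachable configuration the demand $a_3+\max(a_1,a_4)+\max(a_2,a_5)$, discounted by the overlaps, stays within the palette.

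The main obstacle — where essentially all of the work lies — is designing an invariant that is simultaneously maintainable under every single Spoiler move and strong enough to color the next token; pure attacks are easy to analyze, but interleaved ones force delicate bookkeeping. The verification would be a case analysis on which of the three maximal cliques $\{v_1,v_2,v_3\}$, $\{v_2,v_3,v_4\}$, $\{v_3,v_4,v_5\}$ is tight and on whether the new token lands on an endpoint ($v_1$ or $v_5$), a path middle ($v_2$ or $v_4$), or the hub $v_3$; the hard cases are those in which $v_1$ and $v_5$ are already heavily loaded, where Algorithm must decide how much to grow $C_1\cap C_5$ for the current token while keeping $C_2$ off $C_1$, $C_4$ off $C_5$, $C_2$ off $C_4$, and the reserve still large enough for any later hub attack. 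Once the invariant survives all these cases the coloring is proper throughout and uses at most $\floor{\frac{5w+2}{4}}$ colors, which together with the lower bound gives $\val{w}{P_5^2}=\ceil{\frac54 w}$ for even $w$.
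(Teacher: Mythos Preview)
Your lower bound is correct and matches the paper's: it simply invokes the Spoiler Strategy with $t=2$ exactly as in Corollary~\ref{cor:C5p}, and the arithmetic deducing $\val{w}{P_5^2}=\ceil{5w/4}$ for even $w$ is fine (using that $\val{w}{P_5^2}$ is an integer).

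The upper bound, however, is not a proof but a plan for one. You outline a prefix/suffix scheme with a controlled overlap $|C_1\cap C_5|\approx(\min(a_1,a_5)-w/4)^+$ and a yet-to-be-specified ``numeric invariant'', and you say explicitly that ``essentially all of the work lies'' in designing and verifying that invariant. That work is never carried out. Without an explicit rule for how to ``perturb the choices for $v_5$'' and without the promised case analysis showing the invariant survives every interleaved Spoiler move, there is no argument that Algorithm stays within $\floor{(5w+2)/4}$ colors.

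The paper sidesteps this difficulty with a simpler and fully explicit strategy. Relabel via the complement, so that $v_0$ is the universal vertex and $v_1v_2v_3v_4$ is the $P_4$ in $\overline{P_5^2}$ (the three nonadjacent pairs are $\{v_1,v_2\}$, $\{v_2,v_3\}$, $\{v_3,v_4\}$). Algorithm's rule is: tokens at $v_0$ always get a fresh color; the pairs $\{v_1,v_2\}$ and $\{v_3,v_4\}$ are \emph{greedy} (a new token first tries to reuse a color appearing only on its mate); otherwise use a fresh color, \emph{except} that a token at $v_2$ or $v_3$ reuses a color appearing only on the other of $\{v_2,v_3\}$ whenever opening a new class would push $\min\{y_2,y_3\}$ above $(w+1)/4$, where $y_I$ counts color classes whose vertex set is exactly $\{v_i:i\in I\}$. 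No linear order, no prefixes, no calibrated running overlap --- just two greedy pairs and one threshold. The analysis is then a four-case split on which of $\{y_1,y_2\}$ and $\{y_3,y_4\}$ vanish at termination, using only the three clique inequalities for $v_0v_1v_3$, $v_0v_1v_4$, $v_0v_2v_4$; three cases give $S\le w$ or $S\le (5w+1)/4$ immediately, and the remaining case ($y_2=y_3=0$ with $y_{23}>0$) uses the threshold to force $y_{12}+y_{34}\ge (w-2)/2$ and a short linear combination of the three inequalities to obtain $S\le(5w+2)/4$. Your intuition that the key quantity is the overlap between your $C_1$ and $C_5$ is exactly right --- in the paper's variables this is $y_{23}$, and the threshold rule is precisely what governs it --- but the paper implements that control as a single threshold at the moment of coloring rather than as a running invariant to be maintained, which is what makes the verification short.
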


\begin{proof}
The lower bound follows the same argument as in Corollary~\ref{cor:C5p}.  For the upper bound, we give a strategy for Algorithm.  It is convenient to label the vertices of $P_5^2$ according to the structure of the complementary graph.  The complement of $P_5^2$ is the union of an isolated vertex $v_0$ and a path $v_1v_2v_3v_4$.  Each color class that Algorithm uses appears on vertices forming a nonempty independent set in $P_5^2$; these are the $5$ singletons plus the pairs $v_1v_2$, $v_2v_3$, and $v_3v_4$.  For each nonempty set $I\subseteq \{0,\ldots,4\}$, we let $y_I$ be the number of colors assigned by algorithm that appear on vertices in $\{v_i \st i\in I\}$ and no other vertex.  We view the $y_I$ as variables whose values change throughout the game, and we define the vector $y=(y_0, y_1, y_{12}, y_2, y_{23}, y_3, y_{34}, y_4)$.

We are now able to describe Algorithm's strategy.  If Spoiler plays a token at $v_0$, then Algorithm responds by assigning the token a new color.  (Since $v_0$ is dominating in $P_5^2$, Algorithm has no choice here.)  The pairs $v_1v_2$ and $v_3v_4$ are \emph{greedy}; that is, if Spoiler plays a token at a vertex in $\{v_1,\ldots,v_4\}$, then Algorithm first attempts to extend a color class which appears only on its greedy mate to accommodate the new token.  Otherwise, Algorithm assigns the token a new color unless this would result in $\min\{y_2,y_3\}> (w+1)/4$.  In this case, Spoiler has played a token at $v_i\in \{v_2,v_3\}$; let $j$ index the other vertex in $\{v_2,v_3\}$.  Algorithm assigns the new token at $v_i$ a color which appears only at $v_j$.  

Observe that Algorithm maintains the following invariants.  Since $v_1v_2$ and $v_3v_4$ are greedy pairs, we have $\min\{y_1,y_2\} = \min\{y_3,y_4\} = 0$.  We also have $\min\{y_2,y_3\} \le (w+1)/4$.  Also, since $v_0v_1v_3$, $v_0v_1v_4$, and $v_0v_2v_4$ are triangles in $P_5^2$, the width condition implies that each of these triples has at most $w$ tokens.  This gives the following three bounds, which we present as a matrix inequality.

\[
\begin{blockarray}{*{9}{c}}
& y_0 & y_1 & y_{12} & y_2 & y_{23} & y_3 & y_{34} & y_4\\
\begin{block}{c(*{8}{c})}
v_0v_1v_3 & 1 & 1 & 1 &   & 1 & 1 & 1 &   \\
v_0v_1v_4 & 1 & 1 & 1 &   &   &   & 1 & 1 \\
v_0v_2v_4 & 1 &   & 1 & 1 & 1 &   & 1 & 1 \\
\end{block}
\end{blockarray}
~~y~~\le~~%
\begin{blockarray}{c}
\\
\begin{block}{(c)}
w \\ w \\ w \\
\end{block}
\end{blockarray}
\]

Suppose that the game is played to its conclusion.  We bound the total number of colors $S$ used by Algorithm by examining $4$ cases, according to which variables in the pairs $\{y_1,y_2\}$ and $\{y_3,y_4\}$ are zero.  Let $S_1$, $S_2$, and $S_3$ be the sums of the terms on the left hand sides of the inequalities in rows 1, 2, and 3, respectively.  If $y_2 = y_4 = 0$, then we have $S = S_1 + y_2 + y_4 \le w$.  Similarly, if $y_1=y_3=0$, then $S= S_3 + y_1 + y_3 \le w$.  If $y_1 = y_4 = 0$, then we have $S=S_1+y_2 = S_3+y_3$ and hence $S\le w+\min\{y_2,y_3\} \le (5w+1)/4$.

Otherwise, $y_2 = y_3 = 0$.  If also $y_{23} = 0$, then $S = S_2 + y_2 + y_{23} + y_3 \le w$.  Hence we may assume that $y_{23} > 0$, implying that at some point Algorithm extends a color which appears on only one of $v_2,v_3$ so that it appears on both.  Consider the last such time that Algorithm increases $y_{23}$.  Before the corresponding token is played, we have $\max\{y_2,y_3\} > (w+1)/4$ and $\min\{y_2,y_3\} + 1 > (w+1)/4$.  After this token is played and algorithm assigns a color increasing $y_{23}$, we have that $\min\{y_2,y_3\} > (w+1)/4 - 1$ and so $\min\{y_2,y_3\} \ge (w+2)/4 -1$.  However, as this is the last time that $y_{23}$ is increased and yet $y_2 = y_3 = 0$ at the end of the game, it must be that $y_{12}$ and $y_{34}$ are each incremented at least $(w+2)/4 - 1$ times.  Therefore, on termination, we have $y_{12} + y_{34} \ge (w+2)/2 - 2$.  With respect to the values at termination, we compute
\begin{align*}
3S &= (S_1 + y_2 + y_4) + (S_2 + y_2 + y_{23} + y_3) + (S_3 + y_1 + y_3) \\
&\le 3w + y_1 + y_4 + y_{23} \\
&= 3w + S - (y_0 + y_{12} + y_{34})\\
&\le 3w + S - \frac{w-2}{2}
\end{align*}
and it follows that $S\le (5w+2)/4$.
\end{proof}

\section{Open Problems}

There are many open problems.  Perhaps the two most compelling problems are to determine the asymptotics of $\val{w}{\GG}$ and $\rval{w}{\overline{\GG}}$.  There are natural extensions of both problems to higher dimensions, where $\GG_{d,p}$ is the graph on $\RR^d$ with $uv\in E(\GG_{d,p})$ if and only if the $p$-norm distance between $u$ and $v$ is less than $1$.  Chan and Zarrabi-Zadeh~\cite{CZZ} noted that if $\rval{w}{\overline{\GG}} \le cw$ for some constant $c$, then $\rval{w}{\overline{\GG_{d,\infty}}} \le c2^{d-1}w$.  We are not aware of any other work on the higher-dimensional analogues.

It would be nice to obtain $\val{w}{G}$ asymptotically for the remaining minimally non-online-perfect graphs, when $G=C_5^+$ or $G$ is the bull graph.  We believe the following may be approachable.

\begin{conjecture} 
$\val{w}{C_5^+}= (\frac{5}{4} + o(1))w$.
\end{conjecture}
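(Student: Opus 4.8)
The lower bound $\val{w}{C_5^+}\ge(\tfrac54-o(1))w$ is already Corollary~\ref{cor:C5p} (the Spoiler Strategy applied to two nonadjacent degree-two vertices, with $t=2$), so the content of the conjecture is the matching upper bound: an Algorithm strategy for $C_5^+$ using only $(\tfrac54+o(1))w$ colors. The Fractional Coloring Strategy is too weak here, since $\chi_f(C_5^+)=3$ and it only yields $\val{w}{C_5^+}\le(\tfrac32+o(1))w$. The plan is to adapt the proof of Theorem~\ref{thm:P52}.

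I would first fix coordinates using the complement. Since $\overline{C_5^+}=P_5$, label the vertices $a,b,c,d,e$ along this path; then the nonempty independent sets of $C_5^+$ (the possible color classes) are the five singletons together with the four pairs $ab$, $bc$, $cd$, $de$. As in Theorem~\ref{thm:P52} I would track variables $y_a,\dots,y_e,y_{ab},y_{bc},y_{cd},y_{de}$ recording how many color classes of each type Algorithm currently uses, with $S$ their sum and $P=y_{ab}+y_{bc}+y_{cd}+y_{de}$ the number of pair classes. The maximal cliques of $C_5^+$ are the triangle $\{a,c,e\}$ and the three edges $\{a,d\}$, $\{b,d\}$, $\{b,e\}$, and since each clique carries at most $w$ tokens, each yields a linear inequality in the $y$-variables. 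In particular the triangle gives $y_a+y_c+y_e+y_{ab}+y_{bc}+y_{cd}+y_{de}\le w$, that is $S\le w+y_b+y_d$, and adding the $\{b,d\}$-inequality gives $S+P\le 2w$. So it would suffice for Algorithm either to drive $y_b+y_d$ toward $\tfrac14 w$, or to force $P$ to be large precisely when $y_b+y_d$ is.

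For the strategy I would exploit the $a\!\leftrightarrow\!e$, $b\!\leftrightarrow\!d$ symmetry of $C_5^+$ and use greedy extension with priorities: a token at $a$ (resp.\ $e$) extends an existing $\{b\}$-class (resp.\ $\{d\}$-class) if one exists; a token at $c$ extends an existing $\{b\}$- or $\{d\}$-class if one exists; and a token at $b$ (resp.\ $d$) first tries to extend an $\{a\}$-class (resp.\ $\{e\}$-class), then a $\{c\}$-class, and otherwise takes a fresh color---except that a fresh color at $b$ or $d$ is suppressed in favor of forming a $\{b,c\}$- or $\{c,d\}$-pair once a threshold (of the form ``$\min\{y_b,y_d\}$ would exceed $\alpha w$'', or a suitable variant) would be violated. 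These priorities maintain the invariants $\min\{y_a,y_b\}=\min\{y_d,y_e\}=0$. I would then analyze the terminal position by cases according to which singleton variables vanish, combining the four clique inequalities with these invariants and a ``last increment'' argument in the spirit of the $y_{23}$-analysis of Theorem~\ref{thm:P52}: the last time Algorithm diverts a token at $b$ or $d$ into a pair pins down the threshold quantity and forces the neighboring pair variables to be large, which with $S+P\le 2w$ should close the bound at $\tfrac54 w+O(1)$.

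The hard part---and the likely reason this is only a conjecture---is the threshold design and the terminal case analysis, which do not transfer verbatim from $P_5^2$. In $P_5^2$ the complement path $v_1v_2v_3v_4$ has just two interior vertices, a token at $v_2$ can be absorbed into a pair with $v_3$ and vice versa, so one threshold on $\min\{y_2,y_3\}$ suffices and its diversion target is always available. In $C_5^+$ the complement path $a$-$b$-$c$-$d$-$e$ has three interior vertices $b,c,d$; a token at $b$ can only be absorbed toward $a$ or toward $c$, never toward $d$ (and symmetrically for $d$), so there is no direct way to transfer weight between the two ``bad'' variables $y_b$ and $y_d$, and the diversion target for $b$---a $\{c\}$-class---need not even exist when the threshold is reached. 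Making the bookkeeping close at exactly $\tfrac54$ therefore seems to require either a more intricate family of thresholds (perhaps also governing plays at $c$ and merges of $\{b\}$- and $\{d\}$-classes), an auxiliary potential function, or a genuinely different Algorithm strategy; it is even conceivable that Spoiler can force more than $\tfrac54 w$, in which case the first task would be to pin down the true constant.
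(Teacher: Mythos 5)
The statement you are addressing is stated in the paper as a \emph{conjecture}: the authors do not prove it, and explicitly list determining $\val{w}{C_5^+}$ as an open problem. Your writeup is therefore correctly calibrated in tone, but it is a research program, not a proof, and the decisive step is missing. What you do establish is sound: the lower bound is exactly Corollary~\ref{cor:C5p}; the identification $\overline{C_5^+}=P_5$, the list of admissible color classes (five singletons plus $ab,bc,cd,de$), the four maximal cliques $\{a,c,e\}$, $\{a,d\}$, $\{b,d\}$, $\{b,e\}$, and the derived inequalities $S\le w+y_b+y_d$ and $S+P\le 2w$ are all correct, and they correctly reduce the problem to controlling $y_b+y_d$ (or trading it off against $P$). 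This is a faithful transplant of the bookkeeping from Theorem~\ref{thm:P52}.

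The gap is that no complete Algorithm strategy is actually specified --- the threshold rule is left as ``of the form $\min\{y_b,y_d\}>\alpha w$, or a suitable variant'' --- and no terminal-position analysis is carried out, so nothing is proved about the value of $S$ at the end of the game. Moreover, the obstruction you identify is genuine and is precisely why the $P_5^2$ argument does not transfer: in $P_5^2$ the two controlled variables $y_2,y_3$ index adjacent vertices of the complement path, so the class $\{v_2,v_3\}$ is an available diversion target and a single threshold on $\min\{y_2,y_3\}$ closes the argument; in $C_5^+$ the controlled variables $y_b,y_d$ index vertices at distance $2$ in $P_5$, the set $\{b,d\}$ is \emph{not} independent in $C_5^+$ (it is one of the cliques), so there is no pair class coupling them, and the proposed diversion target for a token at $b$ (a $\{c\}$-class) may be empty when the threshold fires. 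Until a concrete rule is given that provably keeps $y_b+y_d\le\frac{w}{4}+O(1)$ or forces $P$ large whenever it fails --- or until Spoiler is shown to beat $\frac54 w$ --- the conjecture remains open, as the paper states.
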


By constructing large graphs that simulate the flexibility of Spoiler in the classical model, one can show that for each $k$, there exists a graph $G_k$ such that $\val{2}{G_k} \ge k$.  However, the following question remains unresolved.

\begin{question}
Is there a constant $c$ such that $\val{w}{G}\le (c + o(1))w$ for each (finite) graph $G$?
\end{question}

We know only that $c$ must be at least $5/4$.

\section*{Acknowledgments}
We thank Csaba Biro for preliminary discussions on the value of the game on powers of infinite paths.

\bigskip

\bibliographystyle{plain}
\bibliography{citations}

\end{document}